\definecolor{db}{RGB}{0, 0, 130}
\definecolor{wildstrawberry}{rgb}{1.0, 0.26, 0.64}
\definecolor{rp}{rgb}{0.25, 0, 0.75}
\definecolor{dg}{rgb}{0, 0.5, 0}
\newcommand{\R}{\mathbb{R}}
\newcommand{\N}{\mathbb{N}}
\newcommand{\EE}{\mathbb{E}}
\newcommand{\customlabel}[2]{%
   \protected@write \@auxout {}{\string\newlabel {#1}{{#2}{\thepage}{#2}{#1}{}}}%
   \hypertarget{#1}{#2\hspace{-0.14cm}}
}
\numberwithin{equation}{section}
\newtheorem{theorem}{Theorem}[section]
\newtheorem{definition}[theorem]{Definition}
\newtheorem{lemma}[theorem]{Lemma}
\newtheorem{remark}[theorem]{Remark}
\author{ Jesus Correa \footnote{Departamento de Matem\'atica, Universidade Estadual de Campinas, Brazil. \texttt{colivera@ime.unicamp.br}.}
 \and    Christian Olivera \footnote{Departamento de Matem\'atica, Universidade Estadual de Campinas, Brazil. \texttt{colivera@ime.unicamp.br}.} \and 
}
\title{ \Large{\textbf{From Stochastic particle  Systems to Stochastic Compressible  Euler Equation}}}
\begin{document}

\maketitle

\begin{abstract}
We consider  stochastic systems of interacting particles with noise  through a potential whose range is large in comparison with the typical distance between neighbouring particles.
It is shown that the empirical measures associated to the position and velocity of the system converge to the solutions of  stochastic compressible Euler equations  in the limit as the particle number tends to infinity. Moreover, we quantify the  distance  between particles and the limit in  suitable Sobolev norm.  
\end{abstract}

\date{}

\maketitle

\noindent \textit{ {\bf Key words and phrases:} 
Stochastic Differential Equation, Compressible  Euler  Equation, Particle systems,  Ito-Kunita-Wentzell formula.}

\vspace{0.3cm} \noindent {\bf MSC2010 subject classification:} 60H15, 
 35R60, 60H30, 35Q31
. 

%
%
%
%
%
%
%

Data sharing not applicable to this article as no datasets were generated or analysed during the current study
\section{Introduction}

In this work we analyse the evolution of  $N$   particles in  $\R^{d}$  where the position $X_{k}^{N}$ verifies for $k=1,\cdots, N$
\begin{align}\label{xk}
d^{2}X_{t}^{k,N}=-\frac{1}{N}\sum_{l=1}^{N}\nabla\phi_{N}\left(X_{t}^{k,N}-X_{t}^{l,N}\right)dt
+ \sigma\left(X_{t}^{k,N} \right)  \frac{dX^{k,N}}{dt}  \circ dB_{t} 
\end{align}

where   $\{(B_{t}^{q})_{t\in[0,T]}, \; q=1,\cdots,d\}$ is a  standard $\R^d$-valued Brownian motions
and the integration in the sense of Stratonovich. The interaction potential  $\phi_{N}$  is obtained from a function  $\phi_{1}$  by the scaling

 \begin{equation}\label{molli}
 \phi_{N}(x)=N^{\beta}\phi_{1}(N^{\beta/d}x),\   \beta\in(0,1).
\end{equation}

It will turn out that the typical distance between neighbouring particles is of order
$N^{1/d}$ as $N\rightarrow \infty$. Hence, the scaling (\ref{molli}) describes a physical situation in which
the interaction has long range. We shall always assume that $\phi_{1}$ is symmetric and
sufficiently smooth.

Our aim is the study of the asymptotic as $N\rightarrow\infty$ of  the time evolution
of the whole system of all particles. Therefore, we investigate the empirical processes

\begin{eqnarray}\label{empp}
S_{t}^{N}:=\frac{1}{N}\sum_{k=1}^{N}\delta_{X_{t}^{k,N}},
\end{eqnarray}

\begin{eqnarray}\label{empp}
 V_{t}^{N}:=\frac{1}{N}\sum_{k=1}^{N}V_{t}^{k,N}\delta_{X_{t}^{k,N}}\qquad k=1,\cdots, N
\end{eqnarray}

where   $\frac{d}{dt} X_{t}^{k,N}=V_{t}^{k,N}$   is the velocity of the $kth$ particle, and $\delta_{a}$, denotes the
Dirae measure at $a$. The measures $S_{t}^{N}$ and $V_{t}^{N}$ determine the distribution
of the positions and the velocities in the $Nth$ system. 
We shall show that $S_{t}^{N}$ and $V_{t}^{N}$ converge as $N\rightarrow\infty$ to solutions of the
continuity equation and the stochastic Euler equation, respectively. 
 The addition of stochastic terms to the governing equations is commonly used to account for
empirical, numerical, and  uncertainties in applications ranging from climatology to turbulence
theory. The stochastically forced system of the compressible  Euler system describing the time
evolution of the mass density  $\varrho$  and the bulk velocity $\upsilon$ of a fluid,

\begin{align}\label{eq1}
\left\{
\begin{array}{lc}
d\varrho =-\operatorname{div}_{x}(\varrho\upsilon)dt
\\[0.3cm]
d(\varrho\upsilon_{q})=-\left(\nabla^{q}p+\operatorname{div}_{x}(\varrho\upsilon_{q}\upsilon)\right)dt 
+ \varrho \sigma_{q}( x) \upsilon_{q} \circ dB_{t}^{q}\qquad q=1,\cdots, d.
\end{array}
\right.
\end{align}

Applying   to  $\varrho\upsilon_{q}$  the  Ito formula for the product of  semimartingale 
and  using the equation that   verifies  $\varrho$,  we can show  that the system  (\ref{eq1}) is equivalent to

\bigskip
\begin{align}\label{eq2}
\left\{
\begin{array}{lc}
d\varrho =-\operatorname{div}_{x}(\varrho\upsilon)dt
\\[0.3cm]
d(\upsilon_{q})=-\left(\frac{1}{\varrho} \nabla^{q}p+\upsilon  \nabla  \upsilon_{q} \right) dt+    \sigma_{q}( x) \upsilon_{q}\circ dB_{t}^{q}\qquad q=1,\cdots, d
\end{array}
\right.
\end{align}

where the pressure is $p=\frac{1}{2}\varrho^{2}.$   
We present the first macroscopic derivation of the stochastic compressible Euler equation. Our proof involves several tools, for example :   Ito-Kunita-Wentzell formula, commutator estimates, Taylor expansion   and some ideas in  the paper \cite{Oes} where the  same equation   without noise  is considered. For related works in the deterministic setting see 
\cite{Carillo}, \cite{Dil} and \cite{Franz}.

The problem of solvability of the stochastic compressible Euler system (\ref{eq2}) is very
challenging with only a few results available. In space dimension one, \cite{Vove}
proved existence of entropy solutions. The only available results in higher space dimensions concern the local well-posedness of
strong solutions, see \cite{Breit}, \cite{Gho}, \cite{Kim},  and references.

\subsection{Related works.}

Mean field limits of large particle systems and fluid dynamics equation 
is an active field of research, hence it is out of the scope of this article to make a full review of it. Instead, we refer to  \cite{Carillo, Maurelli,  FHM, Gui,JabinWang, Marchioro, Meleard3, Osada, Serfaty}   and the references therein for the recent developments. In the following we will focus only on the results of moderately interacting systems :  in  \cite{Oelschlager84} Oelschlager introduced and studied moderately interacting particle systems which are used to obtain local non-linear partial differential equations. The article \cite{Oelschlager84}  is part of a series of works by the author on this subject (see also \cite{Oes2,Oes4}), the convergence results was improved  in \cite{JourdainMeleard} and \cite{Meleard}.  In the recent paper \cite{FlandoliLeimbachOlivera} was  developed a semigroup approach which enables them to show uniform convergence of mollified empirical measures,  see \cite{FlandoliLeocata,Leocata, Live, FlandoliOliveraSimon,Simon} for further applications of
this method.  Recently, in \cite{Pisa} and \cite{ORT} the authors developed new quantitative  estimates for classes of moderately interacting particle systems,  by using a semi-group approach, this approach 
was improvements  in \cite{Hao},  \cite{Knorst} and   \cite{Simon2}. About more advances in  moderate particle systems,  see for instance  \cite{Ansgar}, \cite{Carillo2},  \cite{Chen}, \cite{correa}, \cite{correa2},  \cite{Ansgar2}, and \cite{Steve}.

The literature on particle with  common noise   remains limited ,  for systems with uniformly Lipschitz interaction coefficients \cite{Cogui} established conditional propagation of chaos,  the entropy method has recently been explored for systems with common noise, as shown in \cite{Shao} for incompressible  the Navier-Stokes equations, \cite{Chen2} for the Hegselmann-Krause model, and \cite{Niko} for mean-field systems with bounded kernels. For more results , see \cite{Maurelli}, \cite{correa}, \cite{Knorst2},   \cite{Kotelenez} and \cite{Rose}.

\subsection{Heuristic deduction}
By Ito formula we have 
\begin{align}\label{eq1d.N-S}
\langle S_{t}^{N},f\rangle=&\langle S_{0}^{N},f\rangle+\int_{0}^{t}\langle  S_{s}^{N} ,\nu^{N}(\cdot,s)\nabla f\rangle ds
\end{align}
and 
\begin{align*}
\langle S_{t}^{N},f \nu_{q}^{N}(\cdot,t)\rangle=&\langle S_{0}^{N},f\nu_{q}^{N}(\cdot,0)  \rangle -
\int_{0}^{t}\langle S_{s}^{N},f\nabla^{q} \left(S_{s}^{N}\ast\phi_{N}\right)\rangle ds\\
&+\int_{0}^{t}\langle S_{s}^{N},\nu_{q}^{N}(\cdot,s) \nabla f\cdot \nu^{N}(\cdot,s)\rangle ds+\int_{0}^{t}\langle S_{s}^{N},f \sigma(\cdot)\nu_{q}^{N}   \rangle\circ dB_{s}^{q}\\
&\qquad\qquad\qquad\qquad\qquad\qquad\qquad\qquad\qquad\qquad\qquad q=1,\cdots,d
\end{align*}

The system (\ref{xk}) is equivalent to the  system:
\begin{align}\label{VN}
\left\{
\begin{array}{lc}
d X_{t}^{k,N}=V_{t}^{k,N}dt
\\[0.3cm]
d V_{t}^{k,N}=-\nabla\left(S_{t}^{N}\ast\phi_{N}\right)(X_{t}^{k,N})dt+  \sigma (X_{t}^{k,N})  V_{t}^{k,N} \circ d B_{t}\quad k=1,\cdots, N
\end{array}
\right.
\end{align}

\vspace{0.50cm}

Applying the Ito formula  for  the product  
\begin{align*}
d(V_{t,q}^{k,N}f(X_{t}^{k,N}))=&f(X_{t}^{k,N})\circ d(V_{t,q}^{k,N})+V_{t,q}^{k,N}\circ d(f(X_{t}^{k,N}))\\
    =&-f(X_{t}^{k,N})\nabla^{q}\left( S_{t}^{N}\ast\phi_{N}\right)(X_{t}^{k,N})))dt\\
    &+f(X_{t}^{k,N})\sigma (X_{t}^{k,N})  V_{t,q}^{k,N} \circ d B_{t}^{q}+ V_{t,q}^{k,N}\nabla f(X_{t}^{k,N})\cdot V_{t}^{k,N} dt
\end{align*}
or , equivalently,
\begin{align*}
&f(X_{t}^{k,N})V_{t,q}^{k,N}-f(X_{0}^{k,N})V_{0,q}{k,N}=-\int_{0}^{t}f(X_{s}^{k,N})\nabla^{j}\left( S_{t}^{N}\ast\phi_{N}\right)(X_{s}^{k,N})ds\\
& \quad+\int_{0}^{t}f(X_{s}^{k,N})\sigma (X_{s}^{k,N})  V_{s,q}^{k,N} \circ d B_{s}^{q}+\int_{0}^{t}V_{s,q}^{k,N}\nabla f(X_{s}^{k,N})\cdot V_{s}^{k,N}ds.
\end{align*}
Then we get 
\begin{align}\label{EnuN-S}
&\frac{1}{N}\sum_{k=1}^{N}f(X_{t}^{k,N})V_{t,q}^{k,N}-f(X_{0}^{k,N})V_{0,q}{k,N}=-\int_{0}^{t}\frac{1}{N}\sum_{k=1}^{N}f(X_{s}^{k,N})\nabla^{j}\left( S_{t}^{N}\ast\phi_{N}\right)(X_{s}^{k,N})ds\nonumber\\
&+\int_{0}^{t}\frac{1}{N}\sum_{k=1}^{N}f(X_{s}^{k,N})\sigma (X_{s}^{k,N})  V_{s,q}^{k,N} \circ d B_{s}^{q}+\int_{0}^{t}\frac{1}{N}\sum_{k=1}^{N}V_{s,q}^{k,N}\nabla f(X_{s}^{k,N})\cdot V_{s}^{k,N}ds.
\end{align}
Let us now introduce formally  an $\R^{d}$-valued  function $\nu^{N}$
which satisfies $\nu^{N}(X_{t}^{k,N}, t) = V_{t}^{k,N}$, $ k = 1,\cdots,N$. Obviously, such a
function can be defined only if different particles cannot occupy the same position
at the same instant.

From \eqref{EnuN-S} we deduce
\begin{align}\label{eq2d.N-S}
\langle S_{t}^{N},f \nu_{q}^{N}(\cdot,t)\rangle=&\langle S_{0}^{N},f\nu_{q}^{N}(\cdot,0)  \rangle -
\int_{0}^{t}\langle S_{s}^{N},f\nabla^{q} \left(S_{s}^{N}\ast\phi_{N}\right)\rangle ds\nonumber\\
&+\int_{0}^{t}\langle S_{s}^{N},\nu_{q}^{N}(\cdot,s) \nabla f\cdot \nu^{N}(\cdot,s)\rangle ds+\int_{0}^{t}\langle S_{s}^{N},f \sigma(\cdot)\nu_{q}^{N}   \rangle\circ dB_{s}^{q}\nonumber\\
&\qquad\qquad\qquad\qquad\qquad\qquad\qquad\qquad\qquad\qquad\qquad\quad q=1,\cdots,d.\nonumber\\
\end{align}

Thus, by taking the limit when $N\to\infty$ in the equations \eqref{eq1d.N-S} and \eqref{eq2d.N-S} as $N\to\infty$, we can formally deduce  the system of equations (\ref{eq1}).

\section{ Definitions,  preliminaries and hypothesis.}

\subsection{Space of functions }
For any  $s\in \R$,  we denote by
$H^{s}(  \mathbb{R}^{d})  $ the \emph{Bessel potential space}
\[H^{s}(  \mathbb{R}^{d}):= \Big\{ u \text{ tempered distribution; }  \;   \big(1+|\cdot|^{2}\big)^{\frac{s}{2}}\; \mathcal{F} u(\cdot)  \in  L^2(\mathbb R^d)\Big\},  \]
where $\mathcal Fu$ denotes the \emph{Fourier transform} of $u$. This space is endowed with the norm
 \begin{equation*}
 \| u \|_{s} = \left\| \big(1+|\cdot|^{2}\big)^{\frac{s}{2}
}\; \mathcal{F} u(\cdot)  \right\|_{L^2(\R^d)}. 
 \end{equation*}

We denoted $C_{b}(\R^{d}; \R^{d})$  is the space of bounded continuous $\R^{d}$-valued functions on $\R^{d}$.
Here we used the brackets as an abbreviation for an integral over $\R^d$.

\subsection{Stochastic calculus }
First, through of this paper, we fix a stochastic basis with a
$d$-dimensional Brownian motion $\big( \Omega, \mathcal{F}, \{
\mathcal{F}_t: t \in [0,T] \}, \mathbb{P}, (B_{t}) \big)$. Then, we recall to help the intuition, the following definitions 

$$
\begin{aligned}
\text{Ito:}&  \ \int_{0}^{t} X_s dB_s=
\lim_{n    \rightarrow \infty}   \sum_{t_i\in \pi_n, t_i\leq t}  X_{t_i}    (B_{t_{i+1} \wedge t} - B_{t_i}),
\\[5pt]
\text{Stratonovich:}&  \ \int_{0}^{t} X_s  \circ dB_s=
\lim_{n    \rightarrow \infty}   \sum_{t_i\in \pi_n, t_i\leq t} \frac{ (X_{t_i \wedge t   } + X_{t_i} ) }{2} (B_{t_{i+1} \wedge t} - B_{t_i}),
\\[5pt]
\text{Covariation:}& \ [X, Y ]_t =
\lim_{n    \rightarrow \infty}   \sum_{t_i\in \pi_n, t_i\leq t} (X_{t_i \wedge t   } - X_{t_i} )  (Y_{t_{i+1} \wedge t} - Y_{t_i}),
\end{aligned}
$$
where $\pi_n$ is a sequence of finite partitions of $ [0, T ]$ with size $ |\pi_n| \rightarrow 0$ and
elements $0 = t_0 < t_1 < \ldots  $. The limits are in probability, uniformly in time
on compact intervals. Details about these facts can be found in Kunita 
\cite{Ku2}.

 Now, we recall  the Ito-Kunita-Wentzell formula, see  Theorem 8.3 of \cite{Ku2}.
We consider $X(t,x, \omega)$ be a continuous  $C^{3}$-process and 
continuous  $C^{2}$-semimartingale , for $x\in \R^{d}$, and $t\in [0,T]$ of the form 
\begin{equation*}
 X(t,x,\omega)=X_{0}(x) + \int_{0}^{t} f(s,x,\omega) ds  + \int_{0}^{t} g(s,x,\omega) \circ dB_s.   
\end{equation*}
If $Y(t,\omega)$   is  a continuous semimartingale, then $X(t,Y_{t},\omega)$
is a continuous semimartingale and the following formula holds 
\begin{align*}
 X(t,Y_{t},\omega)=&X_{0}(Y_{0}) + \int_{0}^{t} f(s,Y_{s},\omega) ds  + \int_{0}^{t} g(s,Y_{s},\omega) \circ dB_s\\
 &+ \int_{0}^{t} (\nabla_{x}X)(s,Y_{s},\omega)\circ dY_{s}.
\end{align*}

\subsection{Definition of solution }

  \begin{definition}
	Let $(\varrho_{0},\upsilon_{0})$ be a $H^{s}\left(\R^{d}\right) \times H^{s}\left(\R^{d}\right)$, $s>\frac{d}{2}+2$ and $\varrho_{0}>0$.
Then  $(\varrho, \upsilon, \tau)$   is called a solution of (\ref{eq2}) if the following conditions are satisfied

\begin{enumerate}
    \item[(i)] $\left(\varrho, \upsilon\right)$ is a $H^{s}\left(\R^{d}\right) \times H^{s}\left(\R^{d}\right)$-valued right-continuous progressively measurable process;
\item[(ii)] $\tau$ is a stopping time with respect to $\left(\mathfrak{F}_{t}\right)$ such that $\mathbb{P}$-a.s.
\begin{equation*}
 \tau(w)=\lim _{m \rightarrow \infty} \tau_{m}(w)   
\end{equation*}
where
\begin{equation*}
  \tau_{m}=\inf \left\{0 \leq t<\infty:\left\|\left(\varrho, \upsilon\right)(t)\right\|_{H^{s}(\R^{d})} \geq m\right\}  
\end{equation*}
with the convention that $\tau_{m}=\infty$ if the set above is empty;
\item[(iii)] there holds $\mathbb{P}$-a.s.
\begin{equation*}
   (\varrho, u)   \in C\left([0, \tau_{m}(\omega)] ; H^{s}\left(\R^{d}\right) \times H^{s}\left(\R^{d}\right)\right)   
\end{equation*}
as well as
\begin{align*}
&\varrho\left(t \wedge \tau_{m}\right)=\varrho_{0}-\int_{0}^{t \wedge \tau_{m}} \operatorname{div}_{x}(\varrho\upsilon)\mathrm{d} s, \\
&\upsilon_{q}\left(t \wedge \tau_{m}\right)=\upsilon_{q,0}-\int_{0}^{t \wedge \tau_{m}} \left(\frac{1}{\varrho} \nabla^{q}p+\upsilon  \nabla  \upsilon_{q} \right)\mathrm{d} s+\int_{0}^{t \wedge \tau_{m}} \sigma_{q} \upsilon_{q} \circ dB_{t}^{q}
\end{align*}
for $q=1,\cdots, d$, all $t \in[0, T]$ and all $m\geq 1$.
\end{enumerate}

  \end{definition}
	
\begin{remark}
	On existence and uniqueness  of  solutions for the equations (\ref{eq1})-(\ref{eq2}) we refer to
	 \cite{Breit} and \cite{Kim}.
	\end{remark}

\begin{remark} We recall that by the method of characteristics $\varrho= \varrho_{0}(X_{t}^{-1})$, where $X_{t}$ is the flow 
with drift $\upsilon$, that is, $X_{t}=x+ \int_{0}^{t}\upsilon(t,X_{t}) \  dt$. Therefore,  if $\varrho_{0}>0$  then $\varrho>0$. 
\end{remark}

\begin{remark} We observe that by Sobolev embedding \\  $(\varrho, \upsilon)   \in C\left([0, \tau_{m}(\omega)] ;  C_{b}^{2} \cap L^{1}(\R^{d})  \times   C_{b}^{2}(\R^{d}) \right)$. Then the solutions are classical in the space variable.  The existence of global solutions in time  and explosion criterion are   difficult problems even in the deterministic case.
The stochastic case will be a topic of future research.
\end{remark}

\subsection{ Technical hypothesis}

Next, we suppose that $\phi_{1}$ can be written as a convolution product 

 $$\phi_{1}=\phi_{1}^{r}\ast\phi_{1}^{r}$$

 where  $\phi_{1}^{r}\in C_{b}^{2}(\R^{d})$  and it is symmetric probability density.

\begin{definition}
For all  $q\in\{1,\cdots,d\}$  we define the function 
\begin{align*}
U_{1;\alpha}^{q}:\R^{d}&\longrightarrow\R\\
x&\longmapsto(-1)^{1+|\alpha|} \dfrac{x^{\alpha}}{\alpha!} \nabla^{q}\phi_{1}^{r}(x)
\end{align*}
where  $0\leq|\alpha|\leq L+1$ with $L:=\left[\frac{d+2}{2}\right]$.
\end{definition}

We assume that functions $\phi_{1}^{r}$ and $U_{1;\alpha}^{q}$ satisfy

\begin{align}\label{c1}
|\phi_{1}^{r}(x)|\leq\frac{C}{1+|x|^{d+2}}\qquad |x|\geq1,
\end{align}
\begin{align}\label{cotawildeu}
|\widetilde{U^{q}_{1;\alpha}}(\lambda)|\leq C|\widetilde{\phi_{1}^{r}}(\lambda)|,\qquad\text{para } 1\leq|\alpha|\leq L ,
\end{align}
and 
\begin{align}\label{cotauj}
|U^{q}_{1;\alpha}(x)|\leq C \left(\frac{1}{1+|x|^{d+1}}\right)^{1/2},\qquad\text{para } |\alpha|=L+1\ .
\end{align}
On the existence of the function  $\phi_{1}(x)$ we refer to 
   \cite{Oes3}. We also assume that ${\sigma\in C_{b}^{1}( \R^{d}, \R^{d})}$.

\section{Result}

First we present a simple lemma. 

\begin{lemma}
We assume \eqref{c1}, then there exist $C^{\prime}>0$ such that
\begin{align}\label{cotafNbeta}
\Vert f-(f\ast\phi_{N}^{r})\Vert_{\infty}\leq&C^{\prime}N^{-\beta/d}\Vert\nabla f\Vert_{\infty}\qquad\forall f\in C_{b}^{1}(\R^{d})
\end{align}
\end{lemma}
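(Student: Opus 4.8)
The plan is to recognize $\phi_N^r$ as a mollifier acting at the length scale $N^{-\beta/d}$ and to exploit that it is a probability density, so that the difference $f-(f\ast\phi_N^r)$ can be rewritten purely in terms of the increments of $f$. Concretely, with the rescaling $\phi_N^r(x)=N^{\beta}\phi_1^r(N^{\beta/d}x)$ (the one compatible with $\phi_N=\phi_N^r\ast\phi_N^r$), a change of variables $z=N^{\beta/d}y$ shows $\int_{\R^d}\phi_N^r(y)\,dy=\int_{\R^d}\phi_1^r(z)\,dz=1$, since $\phi_1^r$ is a symmetric probability density. Hence, for each fixed $x\in\R^d$,
\[
f(x)-(f\ast\phi_N^r)(x)=\int_{\R^d}\big(f(x)-f(x-y)\big)\,\phi_N^r(y)\,dy .
\]

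First I would control the increment pointwise. Since $f\in C_b^1(\R^d)$, the fundamental theorem of calculus gives $f(x)-f(x-y)=\int_0^1 \nabla f(x-ty)\cdot y\,dt$, hence $|f(x)-f(x-y)|\le |y|\,\Vert\nabla f\Vert_{\infty}$ uniformly in $x$. Inserting this into the identity above yields
\[
\Vert f-(f\ast\phi_N^r)\Vert_{\infty}\le \Vert\nabla f\Vert_{\infty}\int_{\R^d}|y|\,\phi_N^r(y)\,dy .
\]
Next I would rescale the remaining integral: with $z=N^{\beta/d}y$ one finds $\int_{\R^d}|y|\,\phi_N^r(y)\,dy=N^{-\beta/d}\int_{\R^d}|z|\,\phi_1^r(z)\,dz$, which produces exactly the announced factor $N^{-\beta/d}$, with constant $C'=\int_{\R^d}|z|\,\phi_1^r(z)\,dz$.

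The only point requiring care — and the main (mild) obstacle — is the finiteness of the first absolute moment $C'=\int_{\R^d}|z|\,\phi_1^r(z)\,dz$. This is where hypothesis \eqref{c1} enters: splitting the integral over $\{|z|<1\}$ and $\{|z|\ge 1\}$, the inner part is finite because $\phi_1^r\in C_b^2(\R^d)$ is bounded, while on $\{|z|\ge1\}$ the bound $|\phi_1^r(z)|\le C\,(1+|z|^{d+2})^{-1}$ makes the integrand comparable to $|z|^{-(d+1)}$, whose integral over the exterior of the unit ball converges since $d+1>d$. This gives $C'<\infty$ and completes the estimate. I would note that the statement is purely deterministic and that $\phi_1^r\in C_b^2$ is more than needed here — only the boundedness near the origin and the decay \eqref{c1} controlling the first moment are actually used.
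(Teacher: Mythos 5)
Your proof is correct and follows essentially the same route as the paper: write $f-(f\ast\phi_N^r)$ as an integral of increments against the probability density $\phi_N^r$, bound the increment by $|y|\,\Vert\nabla f\Vert_\infty$, and rescale to extract the factor $N^{-\beta/d}$ times the first moment of $\phi_1^r$. Your only addition is the explicit verification (via the splitting $\{|z|<1\}$, $\{|z|\ge 1\}$ and the decay \eqref{c1}) that this first moment is finite, a point the paper's proof uses but leaves implicit.
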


\begin{proof}
\begin{align*}
|f(x)-(f\ast\phi_{N}^{r})(x)|=&\left\vert\int_{\R^{d}}(f(x)-f(x-y))\phi_{N}^{r}(y)\thinspace dy \right\vert\nonumber\\
\leq&\Vert\nabla f\Vert_{\infty}\int_{\R^{d}}\phi_{N}^{r}(y)|y|\thinspace dy\nonumber\\
=&\Vert\nabla f\Vert_{\infty}\int_{\R^{d}}N^{\beta}\phi_{1}^{r}(N^{\beta/d}y)|y|\thinspace dy\nonumber\\
=&\Vert\nabla f\Vert_{\infty}\int_{\R^{d}}\phi_{1}^{r}(x))|N^{-\beta/d}x|\thinspace dx\\
\leq&C_{0}N^{-\beta/d}\Vert\nabla f\Vert_{\infty}\nonumber
\end{align*}
\end{proof}

We define
 
\begin{align}\label{QN}
Q_{t}^{N}:=\frac{1}{N}\sum_{k=1}^{N}|V_{t}^{k,N}-\upsilon(X_{t}^{k,N},t)|^{2} +\Vert S_{t}^{N}\ast\phi_{N}^{r}-\varrho(.,t)\Vert_{0}^{2}.
\end{align}

\begin{theorem}\label{tomeuler}
Let $s\geq\frac{d}{2}+3$ and $\alpha>\frac{d}{2}+1$.  We assume \eqref{molli}, \eqref{c1}-\eqref{cotauj}.
Then there exist  $C_{m,T}>0$ such that 
\begin{equation*}
\EE Q_{t\wedge \tau_{m}}^{N}\leq C_{m}(\EE Q_{0}^{N}+N^{-\beta/d})\quad\text{para todo } N\in\N,
\end{equation*}
\begin{equation*}
\EE  \|S_{t\wedge \tau_{m}}^{N}- \varrho_{\wedge \tau_{m}}\|_{-\alpha}^{2} \leq C_{m}(\EE Q_{0}^{N}+N^{-\beta/d}) \quad\text{for all } N\in\N,
\end{equation*}
and
\begin{equation*}
\EE   \|V_{t\wedge \tau_{m}}^{N}- (\varrho \upsilon)_{t\wedge \tau_{m}}\|_{-\alpha}^{2} \leq C_{m}(\EE  Q_{N}(0)+N^{-\beta/d}) \quad\text{for all } N\in\N,
\end{equation*}

\end{theorem}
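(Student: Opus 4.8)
The plan is to establish the first (strong) estimate for $\EE Q^{N}_{t\wedge\tau_{m}}$ by deriving a differential inequality and invoking Gronwall's lemma, and then to deduce the two weak ($H^{-\alpha}$) estimates from it by duality. Throughout I write $W^{k,N}_{t}:=V^{k,N}_{t}-\upsilon(X^{k,N}_{t},t)$ for the velocity fluctuation and $\rho^{N}_{t}:=S^{N}_{t}\ast\phi^{r}_{N}$ for the mollified empirical density, so that $Q^{N}_{t}=A^{N}_{t}+B^{N}_{t}$ with $A^{N}_{t}=\frac1N\sum_{k}|W^{k,N}_{t}|^{2}$ and $B^{N}_{t}=\|\rho^{N}_{t}-\varrho\|_{0}^{2}$. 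I also set $\vec{\mu}^{N}_{t}:=\frac1N\sum_{k}W^{k,N}_{t}\,\delta_{X^{k,N}_{t}}$, so that $\vec{\mu}^{N}_{t}\ast\phi^{r}_{N}=\frac1N\sum_{k}W^{k,N}_{t}\,\phi^{r}_{N}(\cdot-X^{k,N}_{t})$. Every constant $C_{m}$ may depend on $m$ through the bound $\|(\varrho,\upsilon)\|_{H^{s}}\le m$ valid on $[0,\tau_{m}]$, which controls $\|\upsilon\|_{C^{1}}$, $\|\operatorname{div}_{x}\upsilon\|_{\infty}$ and $\|\nabla^{2}\varrho\|_{\infty}$ through the embedding $H^{s}\hookrightarrow C^{2}_{b}$ (here $s\ge\frac d2+3$).

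First I would compute the dynamics of $A^{N}$. Applying the It\^o--Kunita--Wentzell formula (Theorem 8.3 of \cite{Ku2}) to $\upsilon_{q}(X^{k,N}_{t},t)$, and using that $X^{k,N}$ has finite variation so that $dX^{k,N}=V^{k,N}dt$ contributes only the transport term $\nabla\upsilon_{q}\cdot V^{k,N}\,dt$, the $dt$-- and $\circ\,dB^{q}$--parts combine with \eqref{VN}. After using $\frac1\varrho\nabla^{q}p=\nabla^{q}\varrho$ and $\upsilon\cdot\nabla\upsilon_{q}-V^{k,N}\cdot\nabla\upsilon_{q}=-W^{k,N}\cdot\nabla\upsilon_{q}$ one finds
\begin{equation*}
dW^{k,N}_{q}=\Big\{\big[\nabla^{q}\varrho-\nabla^{q}(S^{N}\ast\phi_{N})\big](X^{k,N})-\nabla\upsilon_{q}(X^{k,N})\cdot W^{k,N}\Big\}dt+\sigma_{q}(X^{k,N})\,W^{k,N}_{q}\circ dB^{q}.
\end{equation*}
The decisive feature is that the noise is linear in $W^{k,N}$ with the bounded coefficient $\sigma$. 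Applying the Stratonovich chain rule to $|W^{k,N}|^{2}$ and converting to It\^o, the martingale part has zero expectation and the It\^o correction is bounded by $C\,A^{N}$, while the transport drift contributes $\le C_{m}A^{N}$. The only genuinely dangerous drift is $T_{A}:=\frac2N\sum_{k,q}W^{k,N}_{q}\big[\nabla^{q}\varrho-\nabla^{q}(S^{N}\ast\phi_{N})\big](X^{k,N})$.

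Next I would differentiate $B^{N}$. Since the empirical continuity equation and the limiting equation $d\varrho=-\operatorname{div}_{x}(\varrho\upsilon)dt$ carry no noise, $B^{N}$ is of finite variation and $\frac{d}{dt}B^{N}=2\langle\nabla(\rho^{N}-\varrho),\,\vec{\mu}^{N}\ast\phi^{r}_{N}+II\rangle$, where $II:=\frac1N\sum_{k}\upsilon(X^{k,N})\phi^{r}_{N}(\cdot-X^{k,N})-\varrho\upsilon$. Using $\phi_{N}=\phi^{r}_{N}\ast\phi^{r}_{N}$ one has $\nabla^{q}(S^{N}\ast\phi_{N})=\rho^{N}\ast\nabla^{q}\phi^{r}_{N}$, so the bracket in $T_{A}$ splits as $\big[\nabla^{q}\varrho-(\nabla^{q}\varrho)\ast\phi^{r}_{N}\big]+(\varrho-\rho^{N})\ast\nabla^{q}\phi^{r}_{N}$. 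The heart of the proof is that, by the symmetry of $\phi^{r}_{N}$ and an integration by parts, $2\langle\nabla(\rho^{N}-\varrho),\vec{\mu}^{N}\ast\phi^{r}_{N}\rangle$ exactly cancels the contribution of the second (dangerous) summand to $T_{A}$, leaving only the mollification error $\nabla^{q}\varrho-(\nabla^{q}\varrho)\ast\phi^{r}_{N}$, which is $O(N^{-\beta/d})$ in $L^{\infty}$ by the Lemma \eqref{cotafNbeta} and is absorbed into $C_{m}(N^{-\beta/d}+A^{N})$ via Young's inequality. This cancellation is essential, since each of the two pieces separately carries the diverging factor $\sim N^{\beta/d}$ coming from $\nabla\phi^{r}_{N}$. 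I expect the main obstacle to be the control of the commutator $II$. I would split $II=\upsilon(\rho^{N}-\varrho)+II_{a}$ with $II_{a}:=\frac1N\sum_{k}[\upsilon(X^{k,N})-\upsilon(\cdot)]\phi^{r}_{N}(\cdot-X^{k,N})$; the first summand yields $-\langle(\rho^{N}-\varrho)^{2},\operatorname{div}_{x}\upsilon\rangle\le C_{m}B^{N}$ after integration by parts. For $II_{a}$ I would Taylor--expand $\upsilon$ at the evaluation point to order $L=[\frac{d+2}2]$; the resulting moment kernels are precisely $y\mapsto y^{\alpha}\nabla^{q}\phi^{r}_{1}(y)$, i.e.\ the functions $U^{q}_{1;\alpha}$. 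Hypothesis \eqref{cotawildeu} (a Fourier-side domination for $1\le|\alpha|\le L$, used through Plancherel) and \eqref{cotauj} (the pointwise $L^{2}$ kernel decay for $|\alpha|=L+1$, controlling the Taylor remainder) are exactly what is needed to bound the associated convolution operators in $L^{2}$ and reduce $\langle\nabla(\rho^{N}-\varrho),II_{a}\rangle$ to $C_{m}B^{N}+C_{m}N^{-\beta/d}$. Collecting all contributions gives $\frac{d}{dt}\EE Q^{N}_{t\wedge\tau_{m}}\le C_{m}\,\EE Q^{N}_{t\wedge\tau_{m}}+C_{m}N^{-\beta/d}$, and Gronwall's lemma (the martingale term vanishing in expectation by localization) yields the first estimate with constant $C_{m,T}$.

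Finally, the two $H^{-\alpha}$ estimates follow from the first by duality. We may assume $\frac d2+1<\alpha\le s$, the general case following since $\|\cdot\|_{-\alpha}$ is nonincreasing in $\alpha$; then $H^{\alpha}\hookrightarrow W^{1,\infty}\cap C_{b}$. For the density, $\langle S^{N}-\varrho,f\rangle=\langle S^{N},f-f\ast\phi^{r}_{N}\rangle+\langle\rho^{N}-\varrho,f\rangle$; the first term is $O(N^{-\beta/d})\|f\|_{\alpha}$ by the Lemma \eqref{cotafNbeta} (total mass one), and the second is $\le\|\rho^{N}-\varrho\|_{0}\|f\|_{0}\le\sqrt{B^{N}}\,\|f\|_{\alpha}$, whence $\|S^{N}-\varrho\|_{-\alpha}^{2}\le C(N^{-\beta/d}+Q^{N})$ and the second estimate follows on taking expectations. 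For the momentum, $\langle V^{N}-\varrho\upsilon,f\rangle=\frac1N\sum_{k}W^{k,N}\cdot f(X^{k,N})+\langle S^{N}-\varrho,\upsilon\cdot f\rangle$; by Cauchy--Schwarz the first term is $\le\sqrt{A^{N}}\,\|f\|_{\infty}\le C\sqrt{Q^{N}}\,\|f\|_{\alpha}$, and since multiplication by $\upsilon\in H^{s}$ maps $H^{\alpha}$ into itself with norm $\le C_{m}$, the second is $\le C_{m}\|S^{N}-\varrho\|_{-\alpha}\|f\|_{\alpha}$. Squaring and using the first two estimates yields the third.
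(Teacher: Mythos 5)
Your strategy is sound and, once unwound, it contains the same mathematical content as the paper's proof, but the bookkeeping is genuinely different. The paper expands $Q_{t}^{N}$ into six terms (writing $\Vert S_{t}^{N}\ast\phi_{N}^{r}\Vert_{0}^{2}=\frac{1}{N^{2}}\sum_{k,l}\phi_{N}(X_{t}^{k,N}-X_{t}^{l,N})$), differentiates each one, and cancels the singular interaction between the kinetic term and the potential energy; the surviving dangerous block is $A_{N}(1)=2\langle Y_{N},\nabla(Y_{N}\ast\phi_{N})\cdot\upsilon\rangle+R_{N}$ as in \eqref{ANRN}, with $Y_{N}=S_{t}^{N}-\varrho$, and the whole difficulty is concentrated in \eqref{restoA1}. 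You instead derive an SDE for the fluctuation $W^{k,N}$ and a transport identity for $\rho^{N}-\varrho$, and your cancellation is between the $(\varrho-\rho^{N})\ast\nabla^{q}\phi_{N}^{r}$ part of $T_{A}$ and the $\vec{\mu}^{N}\ast\phi_{N}^{r}$ part of $\frac{d}{dt}B^{N}$. I checked that this cancellation is exact (it uses the symmetry of $\phi_{N}^{r}$ and $\phi_{N}=\phi_{N}^{r}\ast\phi_{N}^{r}$, just as the paper's does), that your formula for $dW^{k,N}_{q}$ is correct, and that your treatment of the noise (It\^o corrections quadratic in $W$, martingale killed in expectation), of the mollification errors via Lemma \eqref{cotafNbeta}, and of the two duality arguments matches the paper's \eqref{stocfinal}, \eqref{cotaAN3}, \eqref{limXNvarr}, \eqref{convV}. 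What your formulation buys is economy and transparency: a handful of structurally motivated terms instead of the paper's fifteen $D_{N}(j,t)$, with the cancellation built in rather than found by inspection.

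The one place where your sketch, as literally written, would not close is the commutator term $\langle\nabla(\rho^{N}-\varrho),II_{a}\rangle$, and this is the heart of the theorem. If you Taylor-expand $II_{a}$ with $S^{N}$ inside and bound the resulting convolutions term by term, the estimate fails: after moving the derivative from $\rho^{N}-\varrho$ onto the kernels (which is how the functions $U^{q}_{N;\alpha}$ actually appear --- the raw expansion only produces the moment kernels $z^{\alpha}\phi_{N}^{r}(z)$), the terms involve $S^{N}\ast U^{q}_{N;\alpha}$, and the $\varrho$-part of $S^{N}=Y_{N}+\varrho$ gives $\Vert \varrho\ast U^{q}_{N;\alpha}\Vert_{0}=O(1)$: for $|\alpha|=1$ the kernel $U^{q}_{N;\alpha}$ has unit-order mass, since $\nabla\phi_{N}^{r}$ carries the factor $N^{\beta/d}$ that exactly compensates the moment $z^{\alpha}$. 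You would then pick up a term of size $O(1)\cdot\sqrt{B^{N}}$, and Gronwall would only yield $\EE Q_{t}^{N}\leq C_{m}(\EE Q_{0}^{N}+1)$, destroying the rate $N^{-\beta/d}$. The correct execution --- and it is exactly the paper's decomposition \eqref{ANRN}, \eqref{cotaRN}, \eqref{restoA1} --- is to split first: write $II_{a}$ as the commutator applied to $Y_{N}$ plus the commutator applied to $\varrho$; estimate the $\varrho$-commutator directly as a mollification error of smooth functions (this is the paper's $R_{N}$, of size $N^{-\beta/d}$, and no Taylor expansion may be applied to it); and apply the Taylor expansion with the kernels $U^{q}_{N;\alpha}$ only to the $Y_{N}$-part, where both slots of the bilinear form carry the fluctuation, so that \eqref{cotawildeu} via Plancherel gives $\Vert Y_{N}\ast U^{q}_{N;\alpha}\Vert_{0}\leq C\Vert Y_{N}\ast\phi_{N}^{r}\Vert_{0}$ and \eqref{cotauj} controls the $|\alpha|=L+1$ remainder. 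With that correction, your plan is a faithful --- and in my view cleaner --- reorganization of the paper's proof.
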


\begin{remark}
  We observe by hypotheses  $s> d/2  + 3 $, thus   by 
 by Sobolev embedding $(\varrho, \upsilon)   \in C\left([0, \tau_{m}(\omega)] ;  C_{b}^{3} \cap L^{1}(\R^{d})  \times   C_{b}^{3}(\R^{d}) \right)$. 
 Then for all $m$ there exist a constant  $C_{m}$ such that $\| \varrho \|_{C([0,\tau_{m}], C_{b}^{3}(\R^{d}) )}\leq C_{m} $ and $\| \varrho\|_{C([0,\tau_{m}], C_{b}^{3}(\R^{d}) )}\leq C_{m} $, hence
 $\varrho_{t\wedge \tau_{m}} $ and  $\upsilon_{t\wedge \tau_{m}}$ are   $C^{3}$-semimartingales. Therefore,  we can apply Ito-Kunita-Wentzell formula to  $ \upsilon_{t\wedge \tau_{m}} (X_{t}^{k,N})$.
 
\end{remark}

\begin{proof}

We denoted $V_{t,q}^{k,N},\upsilon_{q} $ the $q$-coordinate of $V_{t}^{k,N}$ and $\upsilon $
respectively. During the proof we use conveniently   the  Einstein's summation convention.
In order not to load the notation during a large part of the proof, we use $t$ for $t\wedge \tau_{m}$. First we observe that 
\begin{align*}
Q_{t}^{N}=&\dfrac{1}{N}\sum_{k=1}^{N}\vert V_{t}^{k,N}\vert^{2}-\dfrac{2}{N}\sum_{k=1}^{N} V_{t}^{k,N}\cdot\upsilon(X_{t}^{k,N},t)+\dfrac{1}{N}\sum_{k=1}^{N}\vert\upsilon(X_{t}^{k,N},t)\vert^{2}+\Vert S_{t}^{N}\ast\phi_{N}^{r}\Vert_{0}^{2}\\
&-2\langle  S_{t}^{N}\ast\phi_{N}^{r},\varrho(.,t)\rangle+\Vert\varrho(.,t)\Vert_{0}^{2}\\
=&\dfrac{1}{N}\sum_{k=1}^{N}\vert V_{t}^{k,N}\vert^{2}-\dfrac{2}{N}\sum_{k=1}^{N} V_{t}^{k,N}\cdot\upsilon(X_{t}^{k,N},t)+\dfrac{1}{N}\sum_{k=1}^{N}\vert\upsilon(X_{t}^{k,N},t)\vert^{2}\\
&+\dfrac{1}{N^{2}}\sum_{k,l=1}^{N}\phi_{N}(X_{t}^{k,N}-X_{t}^{l,N})-\dfrac{2}{N}\sum_{k=1}^{N}\left(\varrho(.,t)\ast\phi_{N}^{r}\right)(X_{t}^{k,N})+\Vert\varrho(.,t)\Vert_{0}^{2}.
\end{align*}

We will calculate  the  Stratonovich differential of each term in the sum of $Q_{t}^{N}$. 

By Ito formula we have 

\begin{align*}
d\left(\dfrac{1}{N}\sum_{k=1}^{N}\vert V_{t}^{k,N}\vert^{2}\right)=&
\dfrac{1}{N}\sum_{k=1}^{N}d\left(\vert V_{t}^{k,N}\vert^{2}\right)=\dfrac{1}{N}\sum_{k=1}^{N}2V_{t,q}^{k,N}\circ d(V_{t,q}^{k,N})\\
=&-\dfrac{2}{N}\sum_{k=1}^{N}V_{t,q}^{k,N}\nabla^{q}\left(S_{t}^{N}\ast\phi_{N}\right)(X_{t}^{k,N})dt\\
&+\dfrac{2}{N}\sum_{k=1}^{N}V_{t,q}^{k,N}  \sigma_{q}(X_{t}^{k,N}) V_{t,q}^{k,N}   \circ d B_{t}^{q}.
\end{align*}
Applying Ito formula  for the product and
 Ito-Kunita-Wentzell formula  we deduce 
\begin{align}\label{went}
&d\left(-\dfrac{2}{N}\sum_{k=1}^{N} V_{t}^{k,N}\cdot\upsilon(X_{t}^{k,N},t)\right)
=-\dfrac{2}{N}\sum_{k=1}^{N}d[ V_{t,q}^{k,N}\upsilon_{q}(X_{t}^{k,N},t)]\nonumber\\
=&-\dfrac{2}{N}\sum_{k=1}^{N}\upsilon_{q}(X_{t}^{k,N},t)\circ d(V_{t,q}^{k,N})-\dfrac{2}{N}\sum_{k=1}^{N}V_{t,q}^{k,N}\circ d(\upsilon_{q}(X_{t}^{k,N},t))\nonumber\\
=&\dfrac{2}{N}\sum_{k=1}^{N}\upsilon(X_{t}^{k,N},t)\cdot\nabla\left(S_{t}^{N}\ast\phi_{N}\right)(X_{t}^{k,N})dt+\dfrac{2}{N}\sum_{k=1}^{N}V_{t,q}^{k,N}\upsilon(X_{t}^{k,N},t)\cdot\nabla\upsilon_{q}(X_{t}^{k,N},t)dt\nonumber\\
&+\dfrac{2}{N}\sum_{k=1}^{N}V_{t,q}^{k,N}\nabla^{q}\varrho(X_{t}^{k,N},t)dt-\dfrac{2}{N}\sum_{k=1}^{N}V_{t,q}^{k,N}\nabla\upsilon_{q}(X_{t}^{k,N},t)\cdot V_{t}^{k,N}dt\nonumber\\
&-\dfrac{2}{N}\sum_{k=1}^{N}\upsilon_{q}(X_{t}^{k,N},t) \sigma_{q}(X_{t}^{k,N})  V_{t,q}^{k,N} \circ dB_{t}^{q}- \dfrac{2}{N}\sum_{k=1}^{N}\ V_{t,q}^{k,N}  \sigma_{q}(X_{t}^{k,N}) \upsilon_{q}(X_{t}^{k,N},t) \circ dB_{t}^{q}.
\end{align}
Applying Ito formula  for the product and
 Ito-Kunita-Wentzell formula  we obtain

\begin{align*}
&d\left(\dfrac{1}{N}\sum_{k=1}^{N}\vert\upsilon(X_{t}^{k,N},t)\vert^{2}\right)=\dfrac{2}{N}\sum_{k=1}^{N}\upsilon_{q}(X_{t}^{k,N},t)\circ d(\upsilon_{q}(X_{t}^{k,N},t))\\
=&-\dfrac{2}{N}\sum_{k=1}^{N}\upsilon_{q}(X_{t}^{k,N},t)\upsilon(X_{t}^{k,N},t)\cdot\nabla\upsilon_{q}(X_{t}^{k,N},t)-\dfrac{2}{N}\sum_{k=1}^{N}\upsilon_{q}(X_{t}^{k,N},t)\nabla^{q}\varrho(X_{t}^{k,N},t)dt\\
&+\dfrac{2}{N}\sum_{k=1}^{N}\upsilon_{q}(X_{t}^{k,N},t) \sigma_{q}(X_{t}^{k,N})  \upsilon_{q}(X_{t}^{k,N},t) \circ dB_{t}^{q}\\
&+\dfrac{2}{N}\sum_{k=1}^{N}\upsilon_{q}(X_{t}^{k,N},t)\nabla \upsilon_{q}(X_{t}^{k,N},t)\cdot V_{N}^{k}(t)dt.
\end{align*}
By Leibniz rule and simple calculation we have 
\begin{align*}
d\left(\dfrac{1}{N^{2}}\sum_{k,l=1}^{N}\phi_{N}(X_{t}^{k,N}-X_{t}^{l,N})\right)=&\dfrac{1}{N^{2}}\sum_{k,l=1}^{N}d\left(\phi_{N}(X_{t}^{k,N}-X_{t}^{l,N})\right)\\
=&\dfrac{1}{N^{2}}\sum_{k,l=1}^{N}\nabla^{q}\phi_{N}(X_{t}^{k,N}-X_{t}^{l,N})\circ d \left(X_{t,q}^{k,N}-X_{t,q}^{l,N} \right)\\
=&\dfrac{1}{N^{2}}\sum_{k,l=1}^{N}\nabla^{q}\phi_{N}(X_{t}^{k,N}-X_{t}^{l,N}) \left(V_{t,q}^{k,N}-V_{t,q}^{l,N} \right)dt\\
=&\dfrac{1}{N}\sum_{k=1}^{N}\left(\dfrac{1}{N}\sum_{l=1}^{N}\nabla^{q}\phi_{N}(X_{t}^{k,N}-X_{t}^{l,N})\right)\ V_{t,q}^{k,N}dt\\
\end{align*}
\begin{align*}
\qquad\qquad\qquad\qquad\qquad\qquad\quad &-\dfrac{1}{N}\sum_{l=1}^{N}\left(\dfrac{1}{N}\sum_{k=1}^{N}\nabla^{q}\phi_{N}(X_{t}^{k,N}-X_{t}^{l,N})\right) V_{t,q}^{l,N}dt \\
=&\dfrac{1}{N}\sum_{k=1}^{N}\left(\dfrac{1}{N}\sum_{l=1}^{N}\nabla^{q}\phi_{N}(X_{t}^{k,N}-X_{t}^{l,K})\right) V_{t,q}^{k,N}dt\\
 &-\dfrac{1}{N}\sum_{l=1}^{N}\left(-\dfrac{1}{N}\sum_{k=1}^{N}\nabla^{q}\phi_{N}(X_{t}^{l,N}-X_{t}^{k,N})\right) V_{t,q}^{l,N}dt \\
=&\dfrac{2}{N}\sum_{k=1}^{N}\left(\dfrac{1}{N}\sum_{l=1}^{N}\nabla^{q}\phi_{N}(X_{t}^{k,N}-X_{t}^{l,N})\right) V_{t,q}^{k,N}dt\\
=&\dfrac{2}{N}\sum_{k=1}^{N}\left(\dfrac{1}{N}\sum_{l=1}^{N}\nabla\phi_{N}(X_{t}^{k,N}-X_{t}^{l,N})\right)\cdot V_{t}^{k,N}dt\\
=&\dfrac{2}{N}\sum_{k=1}^{N}V_{N}^{k}\cdot\nabla\left(S_{t}^{N}\ast\phi_{N}\right)(X_{t}^{k,N})dt
\end{align*}
By Leibniz rule  we obtain 
\begin{align*}
d\left(-\dfrac{2}{N}\sum_{k=1}^{N}\left(\varrho(.,t)\ast\phi_{N}^{r}\right)(X_{t}^{k,N})\right)
=&-\dfrac{2}{N}\sum_{k=1}^{N}d\left(\left(\varrho(.,t)\ast\phi_{N}^{r}\right)(X_{t}^{k,N})\right)\\
=&-\dfrac{2}{N}\sum_{k=1}^{N}d\left(\int\varrho(x,t)\phi_{N}^{r}(x-X_{t}^{k,N})dx\right)\\
=&-\dfrac{2}{N}\sum_{k=1}^{N}V_{t}^{k,N}\cdot\nabla\left(\varrho(.,t)\ast\phi_{N}^{r}\right)(X_{t}^{k,N})dt\\
&-\dfrac{2}{N}\sum_{k=1}^{N}\left( d\varrho(.,t)\ast\phi_{N}^{r}\right)(X_{t}^{k,N})dt\\
=&-\dfrac{2}{N}\sum_{k=1}^{N}V_{t}^{k,N}\cdot\nabla\left(\varrho(.,t)\ast\phi_{N}^{r}\right)(X_{t}^{k,N})dt\\
&+\dfrac{2}{N}\sum_{k=1}^{N}\left(\operatorname{div}_{x}(\varrho(\cdot,t)\upsilon(\cdot,t))\ast\phi_{N}^{r}\right)(X_{t}^{k,N})dt.
\end{align*}
and 
\begin{align*}
d\left(\Vert\varrho(.,t)\Vert_{2}^{2}\right)=&-2\langle\varrho(\cdot,t),\operatorname{div}_{x}(\varrho(\cdot,t)\upsilon(\cdot,t))\rangle dt.
\end{align*}
Adding all terms  we have
\begin{align*}
&d(Q_{t}^{N})\\
=&-\dfrac{2}{N}\sum_{k=1}^{N}V_{t,q}^{k,N}\nabla^{q}\left(S_{t}^{N}\ast\phi_{N}\right)(X_{t}^{k,N})dt+\dfrac{2}{N}\sum_{k=1}^{N}V_{t,q}^{k,N}  \sigma_{q}(X_{t}^{k,N}) V_{t,q}^{k,N}  \circ d B_{t}^{q}\\
&+\dfrac{2}{N}\sum_{k=1}^{N}\upsilon(X_{t}^{k,N},t)\cdot\nabla\left(S_{t}^{N}\ast\phi_{N}\right)(X_{t}^{k,N})dt\\
&+\dfrac{2}{N}\sum_{k=1}^{N}V_{t,q}^{k,N}\left(\upsilon(X_{t}^{k,N},t)\cdot\nabla\upsilon_{q}(X_{t}^{k,N},t)+\nabla^{q}\varrho(X_{t}^{k,N},t)\right)dt\\
&-\dfrac{2}{N}\sum_{k=1}^{N}V_{t,q}^{k,N}\nabla\upsilon_{q}(X_{t}^{k,N},t)\cdot V_{t}^{k,N}+\dfrac{2}{N}\sum_{k=1}^{N}\upsilon_{q}(X_{t}^{k,N},t) \sigma_{q}(X_{t}^{k,N}) \upsilon_{q}(X_{t}^{k,N},t) \circ dB_{t}^{q}\\
&- \dfrac{2}{N}\sum_{k=1}^{N} V_{t,q}^{k,N}  \sigma_{q}(X_{t}^{k,N}) \upsilon_{q}(X_{t}^{k,N},t) \circ dB_{t}^{q}\\
&-\dfrac{2}{N}\sum_{k=1}^{N}\upsilon_{q}(X_{t}^{k,N},t)\left(\upsilon(X_{t}^{k,N},t)\cdot\nabla\upsilon_{q}(X_{t}^{k,N},t)+\nabla^{q}\varrho(X_{t}^{k,N},t)\right)dt\\
&- \dfrac{2}{N}\sum_{k=1}^{N}\upsilon_{q}(X_{t}^{k,N},t) \sigma_{q}(X_{t}^{k,N}) V_{t,q}^{k,N}   \circ dB_{t}^{q}\\
& +\dfrac{2}{N}\sum_{k=1}^{N}\upsilon_{q}(X_{t}^{k,N},t)\nabla\upsilon_{q}(X_{t}^{k,N},t)\cdot V_{t}^{k,N}dt+\dfrac{2}{N}\sum_{k=1}^{N}V_{t,q}^{k,N}\nabla^{q}\left(S_{t}^{N}\ast\phi_{N}\right)(X_{t}^{k,N})dt\\
&-\dfrac{2}{N}\sum_{k=1}^{N}V_{t}^{k,N}\cdot\nabla\left(\varrho(.,t)\ast\phi_{N}^{r}\right)(X_{t}^{k,N})dt\\
&+\dfrac{2}{N}\sum_{k=1}^{N}\left(\operatorname{div}_{x}(\varrho(\cdot,t)\upsilon(\cdot,t))\ast\phi_{N}^{r}\right)(X_{t}^{k,N})dt-2\langle\varrho(\cdot,t),\operatorname{div}_{x}(\varrho(\cdot,t)\upsilon(\cdot,t))\rangle dt
\end{align*}
 Doing obvious cancellations we deduce
\begin{align*}
&d(Q_{t}^{N})\\
=&\dfrac{2}{N}\sum_{k=1}^{N}\upsilon(X_{t}^{k,N},t)\cdot\nabla\left(S_{t}^{N}\ast\phi_{N}\right)(X_{t}^{k,N})dt+\dfrac{2}{N}\sum_{k=1}^{N}V_{t,q}^{k,N}\upsilon(X_{t}^{k,N},t)\cdot\nabla\upsilon_{q}(X_{t}^{k,N},t)dt\\
&+\dfrac{2}{N}\sum_{k=1}^{N}V_{t}^{k,N}\cdot\nabla\varrho(X_{t}^{k,N},t)dt-\dfrac{2}{N}\sum_{k=1}^{N}V_{t,q}^{k,N}\nabla\upsilon_{q}(X_{t}^{k,N},t)\cdot V_{t}^{k,N} dt\\
&-\dfrac{2}{N}\sum_{k=1}^{N}\upsilon_{q}(X_{t}^{k,N},t)\upsilon(X_{t}^{k,N},t)\cdot\nabla\upsilon_{q}(X_{t}^{k,N},t)dt-\dfrac{2}{N}\sum_{k=1}^{N}\upsilon(X_{t}^{k,N},t)\cdot\nabla\varrho(X_{t}^{k,N},t)dt\\
\end{align*}

\begin{align*}
&+\dfrac{2}{N}\sum_{k=1}^{N}\upsilon_{q}(X_{t}^{k,N},t)\nabla\upsilon_{q}(X_{t}^{k,N},t)\cdot V_{t}^{k,N}dt-\dfrac{2}{N}\sum_{k=1}^{N}V_{t}^{k,N}\cdot\nabla\left(\varrho(.,t)\ast\phi_{N}^{r}\right)(X_{t}^{k,N})dt\\
&+\dfrac{2}{N}\sum_{k=1}^{N}\left(\operatorname{div}_{x}(\varrho(\cdot,t)\upsilon(\cdot,t))\ast\phi_{N}^{r}\right)(X_{t}^{k,N})dt-2\langle\varrho(\cdot,t),\operatorname{div}_{x}(\varrho(\cdot,t)\upsilon(\cdot,t))\rangle dt\\
&+\dfrac{2}{N}\sum_{k=1}^{N}  V_{t,q}^{k,N}  \sigma_{q}(X_{t}^{k,N}) V_{t,q}^{k,N}  \circ d B_{t}^{q}+\dfrac{2}{N}\sum_{k=1}^{N} \upsilon_{q}(X_{t}^{k,N},t) \sigma_{q}(X_{t}^{k,N}) \upsilon_{q}(X_{t}^{k,N},t) \circ dB_{t}^{q}\\
& - \dfrac{4}{N}\sum_{k=1}^{N}  \upsilon_{q}(X_{t}^{k,N},t) \sigma_{q}(X_{t}^{k,N}) V_{t,q}^{k,N} \circ dB_{t}^{q}.
\end{align*}
From proposition \ref{Ito-correction} we have 
\begin{align}\label{stoch1}
&\dfrac{2}{N}\sum_{k=1}^{N} V_{t,q}^{k,N}  \sigma_{q}(X_{t}^{k,N}) V_{t,q}^{k,N} \circ d B_{t}^{q}\nonumber\\
=&\dfrac{2}{N}\sum_{k=1}^{N} V_{t,q}^{k,N}  \sigma_{q}(X_{t}^{k,N}) V_{t,q}^{k,N}  d B_{t}^{q}+ \dfrac{2}{N}\sum_{k=1}^{N} |V_{t,q}^{k,N}|^{2}  |\sigma_{q}(X_{t}^{k,N})|^{2}  dt,
\end{align}
\begin{align}\label{stoch2}
&\dfrac{2}{N}\sum_{k=1}^{N}  \upsilon_{q}(X_{t}^{k,N},t) \sigma_{q}(X_{t}^{k,N}) \upsilon_{q}(X_{t}^{k,N},t)  \circ dB_{t}^{q}\nonumber\\
=&\dfrac{2}{N}\sum_{k=1}^{N} \upsilon_{q}(X_{t}^{k,N},t) \sigma_{q}(X_{t}^{k,N}) \upsilon_{q}(X_{t}^{k,N},t) dB_{t}^{q}+ \dfrac{2}{N}\sum_{k=1}^{N} |\upsilon_{q}(X_{t}^{k,N},t)|^{2} |\sigma_{q}(X_{t}^{k,N})|^{2} dt,
\end{align}
and
\begin{align}\label{stoch3}
&\dfrac{2}{N}\sum_{k=1}^{N} \upsilon_{q}(X_{t}^{k,N},t) \sigma_{q}(X_{t}^{k,N}) V_{t,q}^{k,N}  \circ dB_{t}^{q}\nonumber\\
=& \dfrac{2}{N}\sum_{k=1}^{N} \upsilon_{q}(X_{t}^{k,N},t) \sigma_{q}(X_{t}^{k,N}) V_{t,q}^{k,N} dB_{t}^{q}+  \dfrac{2}{N}\sum_{k=1}^{N} \upsilon_{q}(X_{t}^{k,N},t) |\sigma_{q}(X_{t}^{k,N})|^{2} V_{t,q}^{k,N} dt.
\end{align}
From (\ref{stoch1}), (\ref{stoch2}), (\ref{stoch3}) we have 
\begin{align*}
  &d\left(Q_{t}^{N}\right)\\
 =&\dfrac{2}{N}\sum_{k=1}^{N}\upsilon(X_{t}^{k,N},t)\cdot\nabla\left(S_{t}^{N}\ast\phi_{N}\right)(X_{t}^{k,N})dt\\
 &+\dfrac{2}{N}\sum_{k=1}^{N}V_{t,q}^{k,N}\upsilon(X_{t}^{k,N},t)\cdot\nabla\upsilon_{q}(X_{t}^{k,N},t)dt+\dfrac{2}{N}\sum_{k=1}^{N}V_{t}^{k,N}\cdot\nabla\varrho(X_{t}^{k,N},t)dt\\   
\end{align*}
\begin{align*}
&-\dfrac{2}{N}\sum_{k=1}^{N}V_{t,q}^{k,N}\nabla\upsilon_{q}(X_{t}^{k,N},t)\cdot V_{t}^{k,N} dt- \dfrac{2}{N}\sum_{k=1}^{N}\upsilon_{q}(X_{t}^{k,N},t)\upsilon(X_{t}^{k,N},t)\cdot\nabla\upsilon_{q}(X_{t}^{k,N},t)dt\\
&-\dfrac{2}{N}\sum_{k=1}^{N}\upsilon(X_{t}^{k,N},t)\cdot\nabla\varrho(X_{t}^{k,N},t)dt+ \dfrac{2}{N}\sum_{k=1}^{N}\upsilon_{q}(X_{t}^{k,N},t)\nabla\upsilon_{q}(X_{t}^{k,N},t)\cdot V_{t}^{k,N}dt\\
&-\dfrac{2}{N}\sum_{k=1}^{N}V_{t}^{k,N}\cdot\nabla\left(\varrho(.,t)\ast\phi_{N}^{r}\right)(X_{t}^{k,N})dt+\dfrac{2}{N}\sum_{k=1}^{N}\left(\operatorname{div}_{x}(\varrho(\cdot,t)\upsilon(\cdot,t))\ast\phi_{N}^{r}\right)(X_{t}^{k,N})dt\\
&-2\langle\varrho(\cdot,t),\operatorname{div}_{x}(\varrho(\cdot,t)\upsilon(\cdot,t))\rangle dt+    \dfrac{2}{N}\sum_{k=1}^{N} |V_{t,q}^{k,N}|^{2}  |\sigma_{q}(X_{t}^{k,N})|^{2}  dt\\
&+ \dfrac{2}{N}\sum_{k=1}^{N} |\upsilon_{q}(X_{t}^{k,N},t)|^{2}  |\sigma_{q}(X_{t}^{k,N})|^{2}  dt- \dfrac{2}{N}\sum_{k=1}^{N} \upsilon_{q}(X_{t}^{k,N},t) |\sigma_{q}(X_{t}^{k,N})|^{2} V_{t,q}^{k,N} dt \\
&-\dfrac{2}{N}\sum_{k=1}^{N}  \upsilon_{q}(X_{t}^{k,N},t) |\sigma_{q}(X_{t}^{k,N}) |^{2} V_{t,q}^{k,N}dt+  \dfrac{2}{N}\sum_{k=1}^{N} \sigma_{q}(X_{t}^{k,N})  |V_{t,q}^{k,N}-\upsilon_{q}(X_{t}^{k,N},t) |^{2}  dB_{t}^{q} \\
=&\sum_{j=1}^{14}   D_{N}(j,t)dt  + D_{N}(15,t) dB_{t}^{q}.
\end{align*}
From the definition of the empirical measure and convolution,  and integration by part we deduce 
\begin{align*}
&D_{N}(9,t)=2\left\langle S_{t}^{N},\left(\left(\operatorname{div}_{x}\left(\varrho(.,t)\upsilon(.,t)\right)\right)\ast\phi_{N}^{r}\right)(.)\right\rangle\\
=&2\left\langle (S_{t}^{N}\ast\phi_{N}^{r})(.),\operatorname{div}_{x}\left(\varrho(.,t)\upsilon(.,t)\right)\right\rangle=2\int_{\R^{d}}(S_{t}^{N}\ast\phi_{N}^{r})(x)\operatorname{div}_{x}\left(\varrho(x,t)\upsilon(x,t)\right)\thinspace dx\\
=&2\int_{\R^{d}}\operatorname{div}_{x}\left((S_{t}^{N}\ast\phi_{N}^{r})(x)\varrho(x,t)\upsilon(x,t)\right)\thinspace dx-2\int_{\R^{d}}\nabla(S_{t}^{N}\ast\phi_{N}^{r})(x)\cdot\left(\varrho(x,t)\upsilon(x,t)\right)\thinspace dx\\
=&-2\int_{\R^{d}}\nabla(S_{t}^{N}\ast\phi_{N}^{r})(x)\cdot\left(\varrho(x,t)\upsilon(x,t)\right)\thinspace dx\\
=&-2\left\langle\nabla(S_{t}^{N}\ast\phi_{N}^{r})(.),\varrho(.,t)\upsilon(.,t)\right\rangle.
\end{align*}
From the definition of the empirical measure and integration by parts we have
\begin{align}\label{AN1}
A_{N}(1,t):=&D_{N}(1,t)+D_{N}(6,t)+D_{N}(9,t)+D_{N}(10,t)\nonumber\\
=&\dfrac{2}{N}\sum_{k=1}^{N} \upsilon(X_{t}^{k,N},t)\cdot\nabla(S_{t}^{N}\ast\phi_{N})(X_{t}^{k,N})-\dfrac{2}{N}\sum_{k=1}^{N}\upsilon(X_{t}^{k,N},t)\cdot\nabla\varrho(,t)\nonumber\\
&-2\left\langle\nabla(S_{t}^{N}\ast\phi_{N}^{r})(.),\varrho(.,t)\upsilon(.,t)\right\rangle-2\left\langle\varrho(.,t),\operatorname{div}_{x}(\varrho(.,t)\upsilon(.,t))\right\rangle\nonumber\\
=&2\left\langle S_{t}^{N},\nabla(S_{t}^{N}\ast\phi_{N})(.)\cdot\upsilon(,t)\right\rangle-2\left\langle S_{t}^{N},\nabla\varrho(,t)\cdot\upsilon(.,t)\right\rangle\nonumber\\
&-2\left\langle\varrho(.,t),\nabla(S_{t}^{N}\ast\phi_{N}^{r})(.)\cdot\upsilon(.,t)\right\rangle+2\left\langle\varrho(.,t),\nabla\varrho(.,t)\cdot\upsilon(.,t)\right\rangle.
\end{align}
 We have 
\begin{align}\label{AN2}
A_{N}(2,t):=&D_{N}(2,t)+D_{N}(4,t)+D_{N}(7,t)+D_{N}(5,t)\nonumber\\
=&\dfrac{2}{N}\sum_{k=1}^{N}V_{t,q}^{k,N}\upsilon(X_{t}^{k,N},t)\cdot\nabla\upsilon_{q}(X_{t}^{k,N},t)-\dfrac{2}{N}
\sum_{k=1}^{N}V_{t,q}^{k,N}\nabla\upsilon_{q}(X_{t}^{k,N},t) \cdot V_{t}^{k,N}\nonumber\\
&+\dfrac{2}{N}\sum_{k=1}^{N}\upsilon_{q}(X_{t}^{k,N},t)\nabla\upsilon_{q}(X_{t}^{k,N},t) \cdot V_{t}^{k,N}\nonumber\\
&-\dfrac{2}{N}
\sum_{k=1}^{N}\upsilon_{q}(X_{t}^{k,N},t)\upsilon(X_{t}^{k,N},t)\cdot\nabla\upsilon_{q}(X_{t}^{k,N},t).
\end{align}
Conveniently grouping and applying Young inequality we get
\begin{align}\label{cotaAN2}
|A_{N}(2,t)|=&\dfrac{2}{N}\bigg|\sum_{k=1}^{N}V_{t,q}^{k,N}\upsilon_{q^{\prime}}(X_{t}^{k,N},t)\nabla^{q^{\prime}}\upsilon_{q}(X_{t}^{k,N},t)-V_{t,q}^{k,N}\nabla^{q^{\prime}}\upsilon_{q}(X_{t}^{k,N},t) 
V_{t,q^{\prime}}^{k,N}\nonumber\\
+&\upsilon_{q}(X_{t}^{k,N},t)\nabla^{q^{\prime}}\upsilon_{q}(X_{t}^{k,N},t) V_{t,q^{\prime}}^{k,N}-\upsilon_{q}(X_{t}^{k,N},t)\upsilon_{q^{\prime}}(X_{t}^{k,N},t)\nabla^{q^{\prime}}\upsilon_{q}
(X_{t}^{k,N},t)\bigg|\nonumber\\
=&\frac{2}{N}\bigg|\sum_{k=1}^{N}-\nabla^{q^{\prime}}\upsilon_{q}(X_{t}^{k,N},t)\left(V_{t,q}^{k,N}-\upsilon_{q}(X_{t}^{k,N},t)\right)\left(V_{t,q^{\prime}}^{k,N}-\upsilon_{q^{\prime}}(X_{t}^{k,N},t)\right)\bigg|\nonumber\\
\leq& C\frac{1}{N}\sum_{q,q^{\prime}=1}^{d}\sum_{k=1}^{N}\Big|V_{t,q}^{k,N}-\upsilon_{q}(X_{t}^{k,N},t)\Big|\thinspace\Big|V_{t,q^{\prime}}^{k,N}-\upsilon_{q^{\prime}}(X_{t}^{k,N},t)\Big|\nonumber\\
\leq& C\frac{1}{N}\sum_{k=1}^{N}\Big|V_{t}^{k,N}-\upsilon(X_{t}^{k,N},t)\Big|^{2}.
\end{align}
We have
\begin{align}\label{AN3}
A_{N}(3,t):=& D_{N}(3,t)+D_{N}(8,t)\nonumber\\
=&\dfrac{2}{N}\sum_{k=1}^{N}V_{t}^{k,N}\cdot\nabla\varrho(X_{t}^{k,N},t)-\dfrac{2}{N}\sum_{k=1}^{N}V_{t}^{k,N}\cdot\nabla\left(\varrho(.,t)\ast\phi_{N}^{r}\right)(X_{t}^{k,N})\nonumber\\
=&\dfrac{2}{N}\sum_{k=1}^{N}V_{t}^{k,N}\cdot\left(\nabla\varrho(X_{t}^{k,N},t)-\nabla\left(\varrho(.,t)\ast\phi_{N}^{r}\right)(X_{t}^{k,N})\right).
\end{align}
By Holder inequality   we deduce 
\begin{align}\label{cotaAN3}
|A_{N}(3,t)|=&\frac{2}{N}\left|\sum_{k=1}^{N}V_{t}^{k,N}\cdot\left(\nabla\varrho(X_{t}^{k,N},t)-\nabla\left(\varrho(.,t)\ast\phi_{N}^{r}\right)(X_{t}^{k,N})\right)\right|\nonumber\\
\leq&\frac{2}{N}\sum_{k=1}^{N}\left|V_{t}^{k,N}\cdot\left(\nabla\varrho(X_{t}^{k,N},t)-\nabla\left(\varrho(.,t)\ast\phi_{N}^{r}\right)(X_{t}^{k,N})\right)\right|\nonumber\\
\leq&\frac{2}{N}\sum_{k=1}^{N}\left|V_{t}^{k,N}\right|\left|\left(\nabla\varrho(X_{t}^{k,N},t)-\nabla\left(\varrho(.,t)\ast\phi_{N}^{r}\right)(X_{t}^{k,N})\right)\right|\nonumber\\
\leq&\frac{2}{N}\sum_{k=1}^{N}\left|V_{t}^{k,N}\right|\Vert\nabla\varrho(.,t)-\nabla\varrho(.,t)\ast\phi_{N}^{r}(.)\Vert_{\infty}\nonumber\\
=&2\Vert\nabla\varrho(.,t)-\nabla\varrho(.,t)\ast\phi_{N}^{r}(.)\Vert_{\infty}\sum_{k=1}^{N}\frac{\left|V_{t}^{k,N}\right|}{N}\nonumber\\
\leq&2\Vert\nabla\varrho(.,t)-\nabla\varrho(.,t)\ast\phi_{N}^{r}(.)\Vert_{\infty}\left(\sum_{k=1}^{N}\frac{\left|V_{t}^{k,N}\right|^{2}}{N}\right)^{1/2}\left(\sum_{k=1}^{N}\frac{1}{N}\right)^{1/2}\nonumber\\
=&2\Vert\nabla\varrho(.,t)-\nabla\varrho(.,t)\ast\phi_{N}^{r}(.)\Vert_{\infty}\left(\frac{1}{N}\sum_{k=1}^{N}\left|V_{t}^{k,N}\right|^{2}\right)^{1/2}\nonumber\\
\leq&2\left(\frac{1}{N}\sum_{k=1}^{N}\left|V_{t}^{k,N}\right|^{2}\right)^{1/2}\sum_{i=1}^{d}\Vert\nabla^{i}\varrho(.,t)-\nabla^{i}\varrho(.,t)\ast\phi_{N}^{r}(.)\Vert_{\infty}\nonumber\\
\leq&2\left(\frac{1}{N}\sum_{k=1}^{N}\left|V_{t}^{k,N}\right|^{2}\right)^{1/2}N^{-\beta/d}\sum_{i=1}^{d}C_{0}\Vert\nabla(\nabla^{i}\varrho(.,t))\Vert_{\infty}\nonumber\\
\leq&N^{-\beta/d}C\left(\frac{1}{N}\sum_{k=1}^{N}\left|\upsilon(X_{t}^{k,N},t)\right|^{2}+\frac{1}{N}\sum_{k=1}^{N}\left|V_{t}^{k,N}-\upsilon(X_{t}^{k,N},t)\right|^{2}\right)^{1/2}\nonumber\\
\leq&N^{-\beta/d}C\left(\Vert\upsilon(.,t)\Vert_{\infty}^{2}+\frac{1}{N}\sum_{k=1}^{N}\left|V_{t}^{k,N}-\upsilon(X_{t}^{k,N},t)\right|^{2}\right)^{1/2}\nonumber\\
\leq&N^{-\beta/d}C\max\{\Vert\upsilon(.,t)\Vert_{\infty},1\}\left(1+\frac{1}{N}\sum_{k=1}^{N}\left|V_{t}^{k,N}-\upsilon(X_{t}^{k,N},t)\right|^{2}\right)^{1/2}\nonumber\\
\leq&N^{-\beta/d}C\max\{\Vert\upsilon(.,t)\Vert_{\infty},1\}\left(1+\frac{1}{N}\sum_{k=1}^{N}\left|V_{t}^{k,N}-\upsilon(X_{t}^{k,N},t)\right|^{2}\right)\nonumber\\
\leq&C\left(N^{-\beta/d}+\frac{1}{N}\sum_{k=1}^{N}\left|V_{t}^{k,N}-\upsilon(X_{t}^{k,N},t)\right|^{2}\right).
\end{align}
We observe $A_{N}(1,t)$  that 
\begin{align}\label{ANRN}
A_{N}(1,t)=&2\langle S_{t}^{N}-\varrho(.,t),\nabla\left(\left(S_{t}^{N}-\varrho(.,t)\right)\ast\phi_{N}\right)(.)\cdot\upsilon(.,t)\rangle+R_{N}(t),\nonumber\\
\end{align}
where 
\begin{align*}
   R_{N}(t)=&2\langle S_{t}^{N}-\varrho(.,t),\nabla\left(\varrho(.,t)\ast\phi_{N}^{r}-\varrho(.,t)\right)(.)\cdot\upsilon(.,t) \rangle\\
   &+\quad2\langle \varrho(.,t),\nabla\left(S_{t}^{N}\ast\phi_{N}-S_{t}^{N}\ast\phi_{N}^{r}\right)(.)\cdot\upsilon(.,t)\rangle.
\end{align*}
By integration by parts and simple calculations we have 
\begin{align}\label{RN}
R_{N}(t)=&2\langle S_{t}^{N}-\varrho(.,t),\nabla\left(\varrho(.,t)\ast\phi_{N}^{r}-\varrho(.,t)\right)(.)\cdot\upsilon(.,t)\rangle\nonumber\\
 &-2\langle \operatorname{div}_{x}(\varrho(\cdot,t)\upsilon(.,t)),\left(S_{t}^{N}\ast\left(\phi_{N}-\phi_{N}^{r}\right)\right)(\cdot)\rangle\nonumber\\
=&2\langle S_{t}^{N}-\varrho(.,t),\nabla\left(\varrho(.,t)\ast\phi_{N}^{r}-\varrho(.,t)\right)(.)\cdot\upsilon(.,t)\rangle\nonumber\\
 &-2\langle S_{t}^{N},\left(\operatorname{div}_{x}(\varrho(\cdot,t)\upsilon(.,t))\ast\left(\phi_{N}-\phi_{N}^{r}\right)\right)(\cdot)\rangle\nonumber\\
=&2\langle S_{t}^{N}-\varrho(.,t),\nabla\left(\varrho(.,t)\ast\phi_{N}^{r}-\varrho(.,t)\right)(.)\cdot\upsilon(.,t)\rangle\nonumber\\
 &+2\langle S_{t}^{N},\left(\operatorname{div}_{x}(\varrho(\cdot,t)\upsilon(.,t))\ast\left(\phi_{N}^{r}-\phi_{N}\right)\right)(\cdot)\rangle.
\end{align}
Using that  $S_{t}^{N},\varrho$  are probability densities  and \eqref{cotafNbeta} we have  
\begin{align}\label{cotaRN}
|R_{N}(t)|\leq& 2|\langle S_{t}^{N}-\varrho(.,t),\nabla\left(\varrho(.,t)\ast\phi_{N}^{r}-\varrho(.,t)\right)(.)\cdot\upsilon(.,t)\rangle|\nonumber\\
&\qquad+2\langle S_{t}^{N},|\left(\operatorname{div}_{x}(\varrho(\cdot,t)\upsilon(.,t))\ast\left(\phi_{N}^{r}-\phi_{N}\right)\right)(\cdot)|\rangle\nonumber\\
\leq&2\langle S_{t}^{N}+\varrho(.,t),1\rangle N^{-\beta/d}\sum_{i=1}^{d}C_{0}\Vert\nabla(\nabla^{i}\varrho(.,t))\Vert_{\infty}\Vert\upsilon(.,t)\Vert_{\infty}\nonumber\\
&\qquad+2N^{-\beta/d}C_{0}\Vert\nabla\left(\operatorname{div}_{x}(\varrho(\cdot,t)\upsilon(.,t))\ast\phi_{N}^{r}\right)\Vert_{\infty}\nonumber\\
=&2\lambda N^{-\beta/d}\Vert\upsilon(.,t)\Vert_{\infty}+2N^{-\beta/d}C_{0}\Vert\left[\nabla\left(\operatorname{div}_{x}(\varrho(\cdot,t)\upsilon(.,t))\right)\right]\ast\phi_{N}^{r}\Vert_{\infty}\nonumber\\
\leq&2\lambda N^{-\beta/d}\Vert\upsilon(.,t)\Vert_{\infty}+2N^{-\beta/d}C_{0}d\Vert\nabla\left(\operatorname{div}_{x}(\varrho(\cdot,t)\upsilon(.,t))\right)\Vert_{\infty}\Vert\phi_{N}^{r}\Vert_{L^{1}}\nonumber\\
=&2\lambda N^{-\beta/d}\Vert\upsilon(.,t)\Vert_{\infty}+2N^{-\beta/d}C_{0}d\Vert\nabla\left(\operatorname{div}_{x}(\varrho(\cdot,t)\upsilon(.,t))\right)\Vert_{\infty}\Vert\phi_{1}^{r}\Vert_{L^{1}}\nonumber\\
\leq&CN^{-\beta/d}.
\end{align}
We postpone the proof of 
\begin{align}\label{restoA1}
|A_{N}(1,t)-R_{N}(t)|=&2|\langle S_{t}^{N}-\varrho(.,t),\nabla\left(\left(S_{t}^{N}-\varrho(.,t)\right)\ast\phi_{N}\right)(.)\cdot\upsilon(.,t)\rangle|\nonumber\\
\leq& C_{1}\left(\Vert S_{t}^{N}\ast\phi_{N}^{r}-\varrho(.,t)\Vert_{0}^{2}+N^{-\beta/d}\right).
\end{align}
We denoted 
\begin{align*}
A_{N}(4,t)= D_{N}(11,t)+  D_{N}(12,t) +  D_{N}(13,t) +  D_{N}(14,t).
\end{align*}

The we have 

\begin{eqnarray}\label{stocfinal}
 A_{N}(4,t) & \leq &   C_{\sigma}  \dfrac{1}{N}\sum_{k=1}^{N} | V_{t}^{k,N}-\upsilon_{q}(X_{t}^{k,N},t)|^{2}.  
\end{eqnarray}

From  \eqref{cotaAN2}, \eqref{cotaAN3}, \eqref{cotaRN}, \eqref{restoA1},  
(\ref{stocfinal}) and $\EE( D_{N}(15,t) dB_{t}^{q})=0$ we conclude that there exist   $C_{m,T}>0$ such that 
\begin{eqnarray*}
\EE Q_{t\wedge \tau_{m}}^{N}  &\leq&  \EE Q_{0}^{N} + C\int_{0}^{t}  \left(\EE  Q_{s\wedge \tau_{m}}^{N}+N^{-\beta/d}\right)ds\qquad0\leq t\leq T.
\end{eqnarray*}
Then by Gronwall lemma we conclude 
\begin{eqnarray}
 \EE  Q_{t\wedge \tau_{m}}^{N}\leq C_{m,T}(\EE Q_{0}^{N}+N^{-\beta/d})\qquad\forall N\in\N. 
\end{eqnarray}

We will show  \eqref{restoA1}. We denoted 
\begin{equation}\label{defYN}
    Y_{N}^{t}(dx):=S_{t}^{N}(dx)-\varrho(.,t)dx.
\end{equation}
By Taylor expansion we have 
\begin{align*}
&\left\langle Y_{N},\nabla(Y_{N}\ast\phi_{N})(.)\cdot\upsilon\right\rangle\nonumber=\left\langle Y_{N},((Y_{N}\ast\phi_{N}^{r})\ast\nabla\phi_{N}^{r})(.)\cdot\upsilon\right\rangle\nonumber\\
=&\left\langle Y_{N},\int_{\R^{d}}\nabla^{q}\phi_{N}^{r}(z)\upsilon_{q}(.)(Y_{N}\ast\phi_{N}^{r})(\cdot-z)dz\right\rangle\nonumber\\
=&\Biggl\langle Y_{N},\int_{\R^{d}}\nabla^{q}\phi_{N}^{r}(z)\left\lbrace\sum_{0\leq|\alpha|\leq L}\dfrac{z^{\alpha}}{\alpha !} \partial^{\alpha}\upsilon_{q}(\cdot-z)+\sum_{|\alpha|=L+1}\dfrac{z^{\alpha}}{\alpha !} \partial^{\alpha}\upsilon_{q}(\cdot-z+\vartheta_{q}(z,.)z)\right\rbrace\Biggr.\\
 &\Biggl.\qquad\qquad\qquad\qquad\qquad\qquad \times(Y_{N}\ast\phi_{N}^{r})(\cdot-z)dz\Biggr\rangle, 
\end{align*}
with 
\begin{equation}\label{partesn/2 +1}
    L:=\left[\dfrac{d+2}{2}\right].
\end{equation}
We observe  that 
\begin{eqnarray*}
(Y_{N}\ast U_{N;\alpha}^{q})(x)&=&(S_{t}^{N}\ast U_{1;\alpha}^{q})(x)-(\varrho\ast U_{N;\alpha}^{q})(x)
\end{eqnarray*}
where
\begin{eqnarray*}
U_{N;\alpha}^{q}(x)&:=&N^{\beta(1+(1-|\alpha|)/d)}U_{1;\alpha}^{q}(N^{\beta/d}x),
\end{eqnarray*}
and 
\begin{align*}
U_{N;\alpha}^{q}(x)=&N^{\beta(1+(1-|\alpha|)/d)}(-1)^{1+|\alpha|}N^{\beta|\alpha|/d}\dfrac{x^{\alpha}}{\alpha !} \nabla_q\phi_{1}^{r}(N^{\beta/d}x)\\
=&(-1)^{1+|\alpha|}\dfrac{x^{\alpha}}{\alpha !} \nabla_q\phi_{N}^{r}(x).
\end{align*}
By simple calculation we  obtain
\begin{align*}
&\langle (S_{t}^{N}\ast U_{N;\alpha}^{q})(.),\partial^{\alpha}\upsilon_{q}(.)(Y_{N}\ast\phi_{N}^{r})(.)\rangle\\
=&\dfrac{1}{N}\sum_{k=1}^{N}\int_{\R^{d}}U_{N;\alpha}^{q}(w-X_{t}^{k,N})\partial^{\alpha}\upsilon_{q}(w)(Y_{N}\ast\phi_{N}^{r})(w)dw\\
=&\dfrac{(-1)^{1+|\alpha|}}{N}\sum_{k=1}^{N}\int_{\R^{d}}U_{N;\alpha}^{q}(X_{t}^{k,N}-w)\partial^{\alpha}\upsilon_{q}(w)(Y_{N}\ast\phi_{N}^{r})(w)dw\\
=&\left\langle S_{t}^{N},(-1)^{1+|\alpha|}\int_{\R^{d}}U_{N;\alpha}^{q}(\cdot-w)\partial^{\alpha}\upsilon_{q}(w)(Y_{N}\ast\phi_{N}^{r})(w)dw\right\rangle\\
=&\left\langle S_{t}^{N},(-1)^{1+|\alpha|}\int_{\R^{d}}U_{N;\alpha}^{q}(z)\partial^{\alpha}\upsilon_{q}(\cdot-z)(Y_{N}\ast\phi_{N}^{r})(\cdot-z)dz\right\rangle\\
=&\left\langle S_{t}^{N},\int_{\R^{d}}\nabla_q\phi_{N}^{r}(z)\dfrac{z^{\alpha}}{\alpha !} \partial^{\alpha}\upsilon_{q}(\cdot-z)(Y_{N}\ast\phi_{N}^{r})(\cdot-z)dz\right\rangle
\end{align*}
and 
\begin{align*}
&\left\langle(\varrho\ast U_{N;\alpha}^{q})(.),\partial^{\alpha}\upsilon_{q}(.)(Y_{N}\ast\phi_{N}^{r})(.)\right\rangle\\
=&\int_{\R^{d}}(\varrho\ast U_{N;\alpha}^{q})(x)\partial^{\alpha}\upsilon_{q}(x)(Y_{N}\ast\phi_{N}^{r})(x)dx\\
=&\int_{\R^{d}}\partial^{\alpha}\upsilon_{q}(x)(Y_{N}\ast\phi_{N}^{r})(x)\left(\int_{\R^{d}}\varrho(x-z)U_{N;\alpha}^{q}(z)dz\right)dx\\
=&\int_{\R^{d}}U_{N;\alpha}^{q}(z)\left(\int_{\R^{d}}\varrho(x-z)\partial^{\alpha}\upsilon_{q}(x)(Y_{N}\ast\phi_{N}^{r})(x)dx\right)dz\\
=&\int_{\R^{d}}U_{N;\alpha}^{q}(z)\left(\int_{\R^{d}}\varrho(w)\partial^{\alpha}\upsilon_{q}(w+z)(Y_{N}\ast\phi_{N}^{r})(w+z)dw\right)dz\\
=&\int_{\R^{d}}\varrho(w)\left(\int_{\R^{d}}U_{N;\alpha}^{q}(z)\partial^{\alpha}\upsilon_{q}(w+z)(Y_{N}\ast\phi_{N}^{r})(w+z)dz\right)dw\\
=&\int_{\R^{d}}\varrho(w)\left(\int_{\R^{d}}U_{N;\alpha}^{q}(-z)\partial^{\alpha}\upsilon_{q}(w-z)(Y_{N}\ast\phi_{N}^{r})(w-z)dz\right)dw\\
=&\int_{\R^{d}}\varrho(w)\left(\int_{\R^{d}}\nabla_q\phi_{N}^{r}(z)\dfrac{z^{\alpha}}{\alpha !} \partial^{\alpha}\upsilon_{q}(w-z)(Y_{N}\ast\phi_{N}^{r})(w-z)dz\right)dw.
\end{align*}
The we have 
\begin{align}\label{YNRNRN*}
\left\langle Y_{N},\nabla(Y_{N}\ast\phi_{N})(.)\cdot\upsilon\right\rangle\nonumber=\sum_{0\leq|\alpha|\leq L}R_{N;\alpha}+\sum_{|\alpha|=L+1}R_{N;\alpha}^{\ast}
\end{align}
where 
\begin{align*}
R_{N;\alpha}=&\langle (Y_{N}\ast U_{N;0}^{q})(.) ,\partial^{\alpha}\upsilon_{q}(.)(Y_{N}\ast\phi_{N}^{r})(.)\rangle\\
R_{N;\alpha}^{\ast}=&\left\langle Y_{N},\int_{\R^{d}}\nabla^{q}\phi_{N}^{r}(z)\dfrac{z^{\alpha}}{\alpha !} \partial^{\alpha}\upsilon_{q}(\cdot-z+\vartheta_{q}(z,.)z)(Y_{N}\ast\phi_{N}^{r})(\cdot-z)dz\right\rangle.
\end{align*}
We will estimated  $R_{N,\alpha}$ and  $R_{N,\alpha}^{*}$.

 For  $|\alpha|=0$ we have 
 \begin{eqnarray}\label{cotaR0}
|R_{N,\alpha}|&=&|\langle (Y_{N}\ast U_{N;\alpha}^{q})(.) ,\partial^{\alpha}\upsilon_{q}(.)(Y_{N}\ast\phi_{N}^{r})(.)\rangle|\nonumber\\
&=&|\langle (Y_{N}\ast \nabla^{q}\phi_{N}^{r})(.),\upsilon_{q}(.)(Y_{N}\ast\phi_{N}^{r})(.)\rangle|\nonumber\\
&=&|\langle(Y_{N}\ast \phi_{N}^{r})(.),\upsilon(.)\cdot\nabla(Y_{N}\ast\phi_{N}^{r})(.)\rangle|\nonumber\\
&=&\dfrac{1}{2}|\langle(Y_{N}\ast\phi_{N}^{r})^{2}(.),\operatorname{div}_{x}\upsilon(.)\rangle|\nonumber\\
&\leq&C\Vert Y_{N}\ast\phi_{N}^{r}\Vert_{L^{2}}^{2}.
\end{eqnarray}

 For $0<|\alpha|\leq L$, by Holder inequality we have 
\begin{align*}
|R_{N,\alpha}|=&|\langle (Y_{N}\ast U_{N;\alpha}^{q})(.) ,\partial^{\alpha}\upsilon_{q}(.)(Y_{N}\ast\phi_{N}^{r})(.)\rangle|\\
\leq&\Vert \partial^{\alpha}\upsilon(.,t)\Vert_{\infty}\sum_{q=1}^{d}\langle |Y_{N}\ast U_{N;\alpha}^{q}|,|Y_{N}\ast\phi_{N}^{r}|\rangle\\
\leq&\Vert \partial^{\alpha}\upsilon(.,t)\Vert_{\infty}\sum_{q=1}^{d}\Vert Y_{N}\ast U_{N;\alpha}^{q}\Vert_{L^{2}}\Vert Y_{N}\ast\phi_{N}^{r}\Vert_{L^{2}}.
\end{align*}

By Plancherel and simple calculation we obtain 
\begin{align*}
\Vert Y_{N}\ast U_{N;\alpha}^{q}\Vert_{L^{2}}^{2}=&(2\pi)^{d}\int_{\R^{d}}|\widetilde{Y_{N}(z)}|^{2}|\widetilde{U^{q}_{N;\alpha}}(z)|^{2}dz\\
=&(2\pi)^{d}N^{\beta(1+(1-|\alpha|)/d)}\int_{\R^{d}}|\widetilde{Y_{N}(z)}|^{2}|N^{-\beta}\widetilde{U_{1;\alpha}^{q}}(N^{-\beta/d}z)|^{2}dz\\
\leq &(2\pi)^{d}CN^{\beta(1+(1-|\alpha|)/d)}\int_{\R^{d}}|\widetilde{Y_{N}(z)}|^{2}|N^{-\beta}\widetilde{\phi_{1}^{r}}(N^{-\beta/d}z)|^{2}dz\\
=&(2\pi)^{d}CN^{\beta(1+(1-|\alpha|)/d)}\int_{\R^{d}}|\widetilde{Y_{N}(z)}|^{2}|N^{-\beta}\widetilde{\phi_{N}^{r}}(z)|^{2}dz\\
=&(2\pi)^{d}CN^{\beta((1-|\alpha|)/d)-1)}\int_{\R^{d}}|\widetilde{Y_{N}(z)}|^{2}|\widetilde{\phi_{N}^{r}}(z)|^{2}dz\\
\leq& C(2\pi)^{d}\int_{\R^{d}}|\widetilde{Y_{N}(z)}|^{2}|\widetilde{\phi_{N}^{r}}(z)|^{2}dz\\
=& C\Vert Y_{N}\ast\phi_{N}^{r}\Vert_{L^{2}}^{2}.
\end{align*}
Then we deduce 
\begin{eqnarray}\label{cotaRL}
|R_{N,\alpha}|&\leq&C\Vert Y_{N}\ast\phi_{N}^{r}\Vert_{L^{2}}^{2}
\end{eqnarray}
For  $|\alpha|=L+1$, we have 
\begin{align}
|R_{N;\alpha}^{\ast}|\leq&\sum_{q=1}^{d}\left\langle S_{t}^{N}+\varrho,\int_{\R^{d}}\left\vert\dfrac{z^{\alpha}}{\alpha !}\nabla^{q}\phi_{N}^{r}(z)\right\vert|\partial^{\alpha}\upsilon(\cdot-z+\vartheta_{q}(z,.)z)||(Y_{N}\ast\phi_{N}^{r})(\cdot-z)|dz\right\rangle\nonumber\\
\leq&\Vert \partial^{\alpha}\upsilon\Vert_{\infty}\sum_{q=1}^{d}\left\langle S_{t}^{N}+\varrho,\int_{\R^{d}}|U_{N;\alpha}^{q}(z)||(Y_{N}\ast\phi_{N}^{r})(\cdot-z)|dz\right\rangle\nonumber\\
=&\Vert \partial^{\alpha}\upsilon\Vert_{\infty}\sum_{q=1}^{d}\left\langle S_{t}^{N}+\varrho,\left(|U_{N;\alpha}^{q}|\ast|Y_{N}\ast\phi_{N}^{r}|\right)(.)\right\rangle\nonumber\\
=&\Vert \partial^{\alpha}\upsilon\Vert_{\infty}\sum_{q=1}^{d}\left\langle S_{t}^{N}\ast|U_{N;\alpha}^{q}|+\varrho\ast|U_{N;\alpha}^{q}|,|Y_{N}\ast\phi_{N}^{r}(.)|\right\rangle.
\end{align}
We observe that 
\begin{eqnarray}
\int_{R^{d}}\frac{1}{1+(N^{\beta/d}|y-x|)^{d+1}}dy=\int_{R^{d}}\frac{1}{1+|u|^{d+1}}N^{-\beta}du
\end{eqnarray}
\begin{eqnarray*}
   \left\Vert\left(\frac{1}{1+N^{\beta(d+1)/d}|\cdot-x|^{d+1}}\right)^{1/2}\right\Vert_{L^{2}}&=&N^{-\beta/2}\left\Vert\left(\frac{1}{1+|\cdot|^{d+1}}\right)^{1/2}\right\Vert_{L^{2}}.
\end{eqnarray*}
By hypothesis (\ref{cotauj})  and Holder inequality  we have 
\begin{align}
\langle S_{t}^{N}\ast|U_{N;\alpha}^{q}|,&|Y_{N}\ast\phi_{N}^{r}|\rangle=\int_{\R^{d}}|U_{N;\alpha}^{q}(y-x)|\int_{\R^{d}}|(Y_{N}\ast\phi_{N}^{r})(y)|dS_{t}^{N}dy\nonumber\\
=&N^{\beta(1-L/d)}\int_{\R^{d}}|U_{1;\alpha}^{q}(N^{\beta/d}(y-x))|\int_{\R^{d}}|(Y_{N}\ast\phi_{N}^{r})(y)|dS_{t}^{N}dy\nonumber\\
\leq&CN^{\beta(1-L/d)}\int_{\R^{d}}\left(\frac{1}{1+N^{\beta(d+1)/d}|y-x|^{d+1}}\right)^{1/2}\int_{\R^{d}}|(Y_{N}\ast\phi_{N}^{r})(y)|dS_{t}^{N}dy\nonumber\\
\leq&CN^{\beta(1-L/d)}\int_{\R^{d}}\int_{\R^{d}}\left(\frac{1}{1+N^{\beta(d+1)/d}|y-x|^{d+1}}\right)^{1/2}|(Y_{N}\ast\phi_{N}^{r})(y)|dydS_{t}^{N}\nonumber\\
\leq&CN^{\beta(1-L/d)}\int_{\R^{d}}\left\Vert\left(\frac{1}{1+N^{\beta(d+1)/d}|\cdot-x|^{d+1}}\right)^{1/2}\right\Vert_{L^{2}}\Vert Y_{N}\ast\phi_{N}^{r}\Vert_{L^{2}}dS_{t}^{N}\nonumber\\
\leq& CN^{\beta(1-L/d)}N^{-\beta/2}\Vert Y_{N}\ast\phi_{N}^{r}\Vert_{L^{2}}\nonumber\\
=&CN^{\beta(1/2-L/d)}\Vert Y_{N}\ast\phi_{N}^{r}\Vert_{L^{2}}
\end{align}
and 
\begin{eqnarray}\label{cotaRN*}
\langle\varrho\ast|U_{N;\alpha}^{q}|, |Y_{N}\ast\phi_{N}^{r}|\rangle\leq CN^{\beta(1/2-L/d)}\Vert Y_{N}\ast\phi_{N}^{r}\Vert_{L^{2}}.
\end{eqnarray}
Then 
\begin{eqnarray}
|R_{N;\alpha}^{\ast}|&\leq&CN^{\beta(1/2-L/d)}\Vert Y_{N}\ast\phi_{N}^{r}\Vert_{L^{2}}
\end{eqnarray}
From   \eqref{cotaR0}, \eqref{cotaRL},  and  \eqref{cotaRN*}, there exist  $C>0$ such that 
\begin{equation}
|\left\langle Y_{N},\nabla(Y_{N}\ast\phi_{N})(.)\cdot\upsilon\right\rangle|\leq C\left(\Vert Y_{N}\ast\phi_{N}^{r}\Vert_{L^{2}}^{2}+N^{\beta(1-2L/d)}\right).
\end{equation}
From \eqref{cotafNbeta} e \eqref{partesn/2 +1} we conclude 
\begin{align*}
|\left\langle Y_{N},\nabla(Y_{N}\ast\phi_{N})(.)\cdot\upsilon\right\rangle|\leq&C\left(\Vert Y_{N}\ast\phi_{N}^{r}\Vert_{L^{2}}^{2}+N^{-\beta/d}\right)\\
=&C\left(\Vert S_{t}^{N}\ast\phi_{N}^{r}-\varrho\ast\phi_{N}^{r}\Vert_{L^{2}}^{2}+N^{-\beta/d}\right)\\
\leq & 2C\left(\Vert S_{t}^{N}\ast\phi_{N}^{r}-\varrho\Vert_{L^{2}}^{2}+\Vert \varrho-\varrho\ast\phi_{N}^{r}\Vert_{L^{2}}^{2}+N^{-\beta/d}\right)\\
\leq & 2C\left(\Vert S_{t}^{N}\ast\phi_{N}^{r}-\varrho\Vert_{L^{2}}^{2}+\Vert \varrho-\varrho\ast\phi_{N}^{r}\Vert_{\infty}\Vert \varrho-\varrho\ast\phi_{N}^{r}\Vert_{L^{2}}\right.\\
&\qquad\qquad\left.+N^{-\beta/d}\right)\\
\leq & 2C\left(\Vert S_{t}^{N}\ast\phi_{N}^{r}-\varrho\Vert_{L^{2}}^{2}+C_{0}N^{-\beta/d}\Vert\nabla\varrho\Vert_{\infty}\Vert \varrho-\varrho\ast\phi_{N}^{r}\Vert_{L^{2}}\right.\\
&\qquad\qquad\left.+N^{-\beta/d}\right)\\
\leq&C\left(\Vert S_{t}^{N}\ast\phi_{N}^{r}-\varrho\Vert_{L^{2}}^{2}+N^{-\beta/d}\right).
\end{align*}
Finally we will show the convergence  of  $S_{t}^{N}$ e $V_{t}^{N}$ to  $\upsilon$ e $\varrho \upsilon$  respectively. We take $f\in   H^{\alpha}(  \mathbb{R}^{d})$, by  \eqref{cotafNbeta} we have 
\begin{align}\label{limXNvarr}
|\langle S_{t}^{N},f\rangle-\langle\varrho(.,t),f \rangle|=&|\langle S_{t}^{N},f-f\ast\phi_{N}^{r}+f\ast\phi_{N}^{r}\rangle-\langle\varrho(.,t),f \rangle|\nonumber\\
=&|\langle S_{t}^{N},f-f\ast\phi_{N}^{r}\rangle+\langle S_{t}^{N}\ast\phi_{N}^{r},f\rangle-\langle\varrho(.,t),f \rangle|\nonumber\\
=&|\langle S_{t}^{N},f-f\ast\phi_{N}^{r}\rangle+\langle S_{t}^{N}\ast\phi_{N}^{r}-\varrho(.,t),f \rangle|\nonumber\\
\leq&|\langle S_{t}^{N},f-f\ast\phi_{N}^{r}\rangle|+|\langle S_{t}^{N}\ast\phi_{N}^{r}-\varrho(.,t),f \rangle|\nonumber\\
\leq&|\langle S_{t}^{N},1\rangle|\left\Vert f-f\ast\phi_{N}^{r}\right\Vert_{\infty}+|\langle S_{t}^{N}\ast\phi_{N}^{r}-\varrho(,t),f \rangle|\nonumber\\
\leq&\left\Vert f-f\ast\phi_{N}^{r}\right\Vert_{\infty}+\left\Vert f\right\Vert_{L^{2}}\left\Vert S_{t}^{N}\ast\phi_{N}^{r}-\varrho(.,t)\right\Vert_{L^{2}}\nonumber\\
\leq&CN^{-\beta/d}\left\Vert\nabla f\right\Vert_{\infty}+\left\Vert f\right\Vert_{L^{2}}\left\Vert S_{t}^{N}\ast\phi_{N}^{r}-\varrho(.,t)\right\Vert_{L^{2}}.
\end{align}
From (\ref{limXNvarr})  we deduce 
\begin{equation*}
 \EE  \|S_{t\wedge \tau_{m}}^{N}- \varrho_{t\wedge \tau_{m}} \|_{-\alpha}^{2} \leq  C_{m,T} (N^{-\beta/ d} + \EE Q_{0}^{N}). \end{equation*}
For  $g\in H^{\alpha}(  \mathbb{R}^{d}) $ we obtain 
\begin{align*}
 &\left|\left\langle V_{t}^{N}-\varrho(.,t)\upsilon(.,t),g\right\rangle\right|\\
=&\left|\frac{1}{N}\sum_{k=1}^{N}V_{t}^{k,N}\cdot g(X_{t}^{k,N})-\left\langle\varrho(.,t),\upsilon(.,t)\cdot g\right\rangle\right|\\
=&\left|\frac{1}{N}\sum_{k=1}^{N}(V_{t}^{k,N}-\upsilon(X_{t}^{k,N}),t)\cdot g(X_{t}^{k,N})+\left\langle S_{t}^{N}-\varrho(.,t),\upsilon(.,t)\cdot g\right\rangle\right|\\  
\leq&\frac{1}{N}\sum_{k=1}^{N}\left|V_{t}^{k,N}-\upsilon(X_{t}^{k,N},t)\right|\left| g(X_{t}^{k,N})\right|+\left|\left\langle S_{t}^{N}-\varrho(.,t),\upsilon(.,t)\cdot g\right\rangle\right|\\
\leq&\Vert g\Vert_{\infty}\frac{1}{N}\sum_{k=1}^{N}\left|V_{t}^{k,N}-\upsilon(X_{t}^{k,N},t)\right|+|\langle S_{t}^{N}-\varrho(.,t),\upsilon(.,t)\cdot g\rangle|
\end{align*}
\begin{align}\label{convV}
\leq& C\Vert g\Vert_{\infty}\left(\frac{1}{N}\sum_{k=1}^{N}|V_{t}^{k,N}-\upsilon(X_{t}^{k,N},t)|^{2} \right)^{1/2}+|\langle S_{t}^{N}-\varrho(.,t),\upsilon(.,t)\cdot g\rangle|.\nonumber\\
&
\end{align}

From  (\ref{convV}) we have 
\begin{equation*}
  \EE \|V_{t\wedge\tau_{m}}^{N}- (\varrho \upsilon)_{t\wedge \tau_{m}}\|_{-\alpha}^{2} \leq  C_{m,T} (N^{-\beta/d} + \EE Q_{0}^{N}).  
\end{equation*}
\end{proof}
	
The proof of the following result  is also present  in \cite{correa}.
\begin{lemma}\label{Ito-correction} We have 
  \begin{align}\label{cov1}
&-\dfrac{4}{N}\sum_{k=1}^{N} \upsilon_{q}(X_{t}^{k,N},t) \sigma_{q}(X_{t}^{k,N}) V_{t,q}^{k,N}\circ dB_{t}^{q}\nonumber\\
&=-\dfrac{4}{N}\sum_{k=1}^{N} \upsilon_{q}(X_{t}^{k,N},t) \sigma_{q}(X_{t}^{k,N}) V_{t,q}^{k,N} dB_{t}^{q}-  \dfrac{4}{N}\sum_{k=1}^{N} \upsilon_{q}(X_{t}^{k,N},t) |\sigma_{q}(X_{t}^{k,N})|^{2} V_{t,q}^{k,N} dt,\nonumber\\
\end{align}
\begin{align}\label{cov2}
&\dfrac{2}{N}\sum_{k=1}^{N} V_{t,q}^{k,N}  \sigma_{q}(X_{t}^{k,N}) V_{t,q}^{k,N} \circ d B_{t}^{q}\nonumber\\
&=\dfrac{2}{N}\sum_{k=1}^{N} V_{t,q}^{k,N}  \sigma_{q}(X_{t}^{k,N}) V_{t,q}^{k,N}  d B_{t}^{q}+ \dfrac{2}{N}\sum_{k=1}^{N} |V_{t,q}^{k,N}|^{2}  |\sigma_{q}(X_{t}^{k,N})|^{2}  dt,
\end{align}
and 
\begin{align}\label{cov3}
&\dfrac{2}{N}\sum_{k=1}^{N}  \upsilon_{q}(X_{t}^{k,N},t) \sigma_{q}(X_{t}^{k,N}) \upsilon_{q}(X_{t}^{k,N},t)  \circ dB_{t}^{q}\nonumber\\
&=\dfrac{2}{N}\sum_{k=1}^{N} \upsilon_{q}(X_{t}^{k,N},t) \sigma_{q}(X_{t}^{k,N}) \upsilon_{q}(X_{t}^{k,N},t) dB_{t}^{q}+ \dfrac{2}{N}\sum_{k=1}^{N} |\upsilon_{q}(X_{t}^{k,N},t)|^{2} |\sigma_{q}(X_{t}^{k,N})|^{2} dt. 
\end{align}
\end{lemma}

\begin{proof} We will verifies (\ref{cov1}). 
The relation between Ito and Stratonovich integrals is (see \cite{Ku2})
\begin{align*}
& \dfrac{2}{N}\sum_{k=1}^{N} \upsilon_{q}(X_{t}^{k,N},t) \sigma_{q}(X_{t}^{k,N}) V_{t,q}^{k,N}\circ dB_{t}^{q} \\  
=&\dfrac{2}{N}\sum_{k=1}^{N} \upsilon_{q}(X_{t}^{k,N},t) \sigma_{q}(X_{t}^{k,N}) V_{t,q}^{k,N}  dB_{t}^{q} + 
\frac{1}{2}  \left[  \dfrac{2}{N}\sum_{k=1}^{N} \upsilon_{q}(X_{.}^{k,N},.) \sigma_{q}(X_{.}^{k,N}) V_{.,q}^{k,N}, B_{.}  \right]_{t}
\end{align*}
where $ \left[.,. \right]_{t} $ denotes the joint quadratic variation. Applying Ito  formula for the product,  Ito-Kunita-Wentzell formula   and  considering the  expression  of   $d[V_{t,q}^{k,N}\upsilon_{q}(X_{t}^{k,N},t)]$ given in  \eqref{went}, we 
have 
\begin{align}\label{upsiqVqrhoqN}
&d\left(-\dfrac{4}{N}\sum_{k=1}^{N} \upsilon_{q}(X_{t}^{k,N},t) \sigma_{q}(X_{t}^{k,N}) V_{t,q}^{k,N}\right)\nonumber\\
=&-\dfrac{4}{N}\sum_{k=1}^{N}\sigma_{q}(X_{t}^{k,N})\circ d[V_{t,q}^{k,N}\upsilon_{q}(X_{t}^{k,N},t)]-\dfrac{4}{N}\sum_{k=1}^{N} \upsilon_{q}(X_{t}^{k,N},t) V_{t,q}^{k,N}\circ d(\sigma_{q}(X_{t}^{k,N}))\nonumber\\
=&\dfrac{4}{N}\sum_{k=1}^{N}\sigma_{q}(X_{t}^{k,N})\upsilon_{q}(X_{t}^{k,N},t)\nabla^{q}\left( S_{t}^{N}\ast\phi_{N}\right)(X_{t}^{k,N})dt\nonumber\\
&+\dfrac{2}{N^{2}}\sum_{l,k=1}^{N}\sigma_{q}(X_{t}^{k,N})\upsilon_{q}(X_{t}^{k,N},t)\zeta_{N,q,\ast}\left(X_{t}^{k,N}-X_{t}^{l,N}\right)\left(V_{t}^{k,N}-V_{t}^{l,N}\right)dt\nonumber\\
&+\dfrac{4}{N}\sum_{k=1}^{N}\sigma_{q}(X_{t}^{k,N})V_{t,q}^{k,N}\upsilon(X_{t}^{k,N},t)\cdot\nabla\upsilon_{q}(X_{t}^{k,N},t)dt\nonumber\\
&+\dfrac{4}{N}\sum_{k=1}^{N}\sigma_{q}(X_{t}^{k,N})V_{t,q}^{k,N}\nabla^{q}\varrho(X_{t}^{k,N},t)dt\nonumber\\
&-\dfrac{4}{N}\sum_{k=1}^{N}\frac{1}{2}\sigma_{q}(X_{t}^{k,N})V_{t,q}^{k,N}\nabla^{q}\left(\varrho^{2}\operatorname{div}_{x} \upsilon\right)(X_{t}^{k,N},t)dt\nonumber\\
&-\dfrac{4}{N}\sum_{k=1}^{N}\sigma_{q}(X_{t}^{k,N})V_{t,q}^{k,N}\frac{1}{2\varrho(X_{t}^{k,N},t)} \sum_{i=1}^{d} \nabla^{i}\left(\varrho^{2}\left[\nabla^{i} \upsilon_{q}+\nabla^{q} \upsilon_{i}\right]\right)(X_{t}^{k,N},t)dt\nonumber\\
&-\dfrac{4}{N}\sum_{k=1}^{N}\sigma_{q}(X_{t}^{k,N})V_{t,q}^{k,N}\nabla\upsilon_{q}(X_{t}^{k,N},t)\cdot V_{t}^{k,N}dt\nonumber\\
&-\dfrac{8}{N}\sum_{k=1}^{N}V_{t,q}^{k,N}|\sigma_{q}(X_{t}^{k,N})|^{2}\upsilon_{q}(X_{t}^{k,N},t)  \circ dB_{t}^{q}\nonumber\\
&-\dfrac{4}{N}\sum_{k=1}^{N} \upsilon_{q}(X_{t}^{k,N},t) V_{t,q}^{k,N}\nabla\sigma_{q}(X_{t}^{k,N})\cdot V_{t}^{k,N}dt.
\end{align}
Only the martingale part of  $-\dfrac{4}{N}\sum_{k=1}^{N} \upsilon_{q}(X_{t}^{k,N},t) \sigma_{q}(X_{t}^{k,N}) V_{t,q}^{k,N}$
counts in the joint quadratic variation.   From  \eqref{upsiqVqrhoqN}  the only term contributing to the covariance is

\begin{align*}
    &\left[-\dfrac{4}{N}\sum_{k=1}^{N} \upsilon_{q}(X_{t}^{k,N},t) \sigma_{q}(X_{t}^{k,N}) V_{t,q}^{k,N},B_{t}^{q}\right]\\
    &=\left[-\dfrac{8}{N}\sum_{k=1}^{N}\int_{0}^{t}V_{s,q}^{k,N}|\sigma_{q}(X_{s}^{k,N})|^{2}\upsilon_{q}(X_{s}^{k,N},s)  \circ dB_{s}^{q},B_{t}^{q}\right]\\
    &=-\dfrac{8}{N}\sum_{k=1}^{N}\int_{0}^{t}V_{s,q}^{k,N}|\sigma_{q}(X_{s}^{k,N})|^{2}\upsilon_{q}(X_{s}^{k,N},s)  ds.
\end{align*}

Then from   relation between the Stratonovich  and  Ito integral, we can deduce (\ref{cov1}).  Using the arguments we deduce 
 (\ref{cov2}) and (\ref{cov3}).

\end{proof}

\section*{Acknowledgements}
C. Olivera  is partially supported by  FAPESP-ANR
by  the grant Stochastic and Deterministic Analysis for Irregular Models$-2022/03379-0$, 
by FAPESP by the grant  $2020/04426-6$, and  CNPq by the grant $422145/2023-8$.
Author Jesus Correa  has received research grants from CNPq
through the grant  141464/2020-8.

\end{document}